\newtheorem{theorem}{Theorem}[section]
\newtheorem{lemma}[theorem]{Lemma}
\theoremstyle{definition}
\newtheorem{example}[theorem]{Example}
\newtheorem{question}[theorem]{Question}
\theoremstyle{remark}
\newtheorem*{remark}{Remark}
\newcommand{\defaultalphabet}{X}
\newcommand{\Gp}{\operatorname{Gp}}
\newcommand{\Mon}{\operatorname{Mon}}
\newcommand{\Inv}{\operatorname{Inv}}
\newcommand{\presentation}[3]{#1 \left\langle \, #2 \mid #3 \, \right\rangle}
\newcommandtwoopt{\gpPres}[2][R][\defaultalphabet]{\presentation{\Gp}{#2}{#1}}
\newcommandtwoopt{\monPres}[2][R][\defaultalphabet]{\presentation{\Mon}{#2}{#1}}
\newcommandtwoopt{\invPres}[2][R][\defaultalphabet]{\presentation{\Inv}{#2}{#1}}
\newcommand{\subGp}[1]{\Gp \left\langle #1 \right\rangle}
\newcommand{\subMon}[1]{\Mon \left\langle #1 \right\rangle}
\newcommand{\pref}{\operatorname{pref}}
\newcommand{\freemon}[1][\defaultalphabet]{ \overline{#1}^{\ast}}
\newcommand{\dop}{ {\prime\prime} }
\newcommand{\ltr}{\lbrace t \rbrace}
\newcommand{\lttr}{\lbrace t, t^{-1} \rbrace}
\newcommand{\ouh}{\overline{U_H}}
\newcommand{\oua}{\overline{U_A}}
\newcommand{\oub}{\overline{U_B}}
\newcommand{\fmuh}{\freemon[U_H]}
\newcommand{\MRF}{\operatorname{MRF}}
\newcommand{\FG}{\operatorname{FG}}
\newcommand{\rsa}{\rightsquigarrow}
\newcommand{\lsa}{\leftsquigarrow}
\providecommand{\leftsquigarrow}{%
  \mathrel{\mathpalette\reflect@squig\relax}%
}
\newcommand{\reflect@squig}[2]{%
  \reflectbox{$\m@th#1\rightsquigarrow$}%
}
\title{
On the submonoid membership problem for HNN extensions of free groups

\footnotetext[0]{
$2020$ \textit{Mathematics Subject Classifaction}:
20F05, 20F10, 20M05, 20M18
}

\footnotetext[0]{
\textit{Keywords}:
Submonoid membership problem, HNN extension, Free group, Prefix membership problem, Inverse monoid, Word problem
}
}
\author{
Jonathan Warne
\footnote{
Work undertaken while a PhD student at the University of East Anglia under the supervision of Robert Gray
}
}
\begin{document}

\maketitle

\begin{abstract}

	We study membership problems in HNN extensions of free 
groups and then apply these results to solve the word problem in certain 
families of one-relator inverse monoids.
In more detail, we consider HNN extensions where the defining isomorphism produces a bijection between subsets of a basis of the free group.
Within such HNN extensions we identify natural conditions on submonoids of this group that suffice for membership in that submonoid to be decidable.
We show that these results can then be applied to solve the prefix membership problem in certain one-relator groups which via results of Ivanov, Margolis and Meakin $(2001)$ then give solutions to the word problem for the corresponding one-relator inverse monoid.
In particular our new techniques allow us to solve the word problem in an example (Example $7.6$) from Dolinka and Gray $(2021)$ which previous methods had not been able to resolve.

\end{abstract}

\section{Introduction}

	HNN extensions of groups are well known to have wild algorithmic behaviours.
For example, Wei{\ss} proved in his PhD thesis that the conjugacy problem is undecidable in general for HNN-extensions of finitely generated free groups \cite{Wei15}, and Miller constructed examples of HNN-extensions of free groups with undecidable subgroup membership problem \cite{Mil71}.
Even when the defining isomorphism is an automorphism of a free group there are examples with undecidable submonoid membership problem (see the Burns free-by-cyclic example (\ref{eqn:undecSubmonProb}) below).
So in general, one should not expect to be able to decide membership in finitely generated submonoids of HNN extension of free groups.
On the other hand, in order to solve the word problem in various one-relator inverse monoids, by results of Ivanov, Margolis and Meakin \cite{IMM01}, it is often necessary to decide membership in certain submonoids of one-relator groups.
Approaching this problem, via the Magnus-Moldavanski{\u i} hierarchy, naturally dovetails with the question of deciding membership in certain finitely generated submonoids of HNN-extensions of free groups, which is the main topic of this paper.

	By a result of Ivanov, Margolis and Meakin \cite[Theorem 4.1]{IMM01}, to show that an E-unitary inverse monoid $\invPres[r=1]$ has decidable word problem it suffices to show that the group described by the presentation $\gpPres[r = 1]$ has decidable prefix membership problem.
While it has been shown by Dolinka and Gray \cite[Theorem 8.2]{DolGra21} that there is a group of the form $\gpPres[r=1]$ where $r$ is reduced which has undecidable prefix membership problem.
This leaves open the question of whether the prefix membership problem is decidable whenever $r$ is cyclically reduced which is particularly notable as $\invPres[r = 1]$ is E-unitary when $r$ is cyclically reduced.

	A standard approach to proving results concerning groups with a single cyclically reduced relator is to use the Magnus-Moldavanski{\u i} hierarchy.
This hierarchy can, in rough terms, be said to classify such groups by how many steps (by HNN extension) they are from being a free group.
The approach is then to show that results which hold lower in the hierarchy, in particular in free groups, extend to those higher up.

	If the group $\gpPres[r = 1]$ is free then membership in all the submonoids, including the prefix monoid, is decidable by Benois' Theorem \cite{Ben69}.
Therefore the natural next step is to check whether direct HNN extensions of free groups have decidable prefix membership problem.
However, it is already known that there are submonoids of HNN extensions of free groups which have undecidable submonoid membership problem.
In fact even free by cyclic groups can have undecidable submonoid membership problem.
Burns, Karrass and Solitar \cite{BKS87} give the following example with two possible presentations,
\begin{equation}	\label{eqn:undecSubmonProb}
	\gpPres[abba(baab)^{-1} = 1][a,b] \cong \gpPres[t^{-1}xt = xy, t^{-1}yt = y][x,y,t],
\end{equation}
which has a submonoid with undecidable membership, although in this case the prefix membership is decidable.

	A result from Dolinka and Gray \cite[Theorem 7.2]{DolGra21} of particular relevance to this line of inquiry is the following
(see the Heirarchy subsection of the Preliminaries section below for an explanation of $\Xi_w$ and $\rho_t(w)$):
\begin{theorem}
	Let $G = \gpPres[w = 1]$ be a group such that $w$ is $t$-sum zero, i.e.
the amount of occurences of $t$ and $t^{-1}$ in $w$ are equal.

Suppose that
\begin{equation*}
	H = \gpPres[\rho_t(w) = 1][\Xi_w]
\end{equation*}
is a free group.
If $w$ is $t$-prefix positive, i.e. every prefix of $w$ contains as least as many occurences of $t$ as occurences of $t^{-1}$,
then $G$ has decidable prefix membership problem.
\end{theorem}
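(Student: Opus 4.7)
The plan is to realise $G$ as an HNN extension of the free group $H$ by means of the Magnus-Moldavanskii rewriting, to exploit $t$-prefix positivity to describe the prefix monoid $P = \subMon{\pref(w)}$ in a sufficiently restrictive way, and then to decide membership in $P$ by reducing, layer by layer, to membership in computable subsets of the free group $H$, where Benois' theorem is available.

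First I would perform the Magnus-Moldavanskii rewrite. Since $w$ is $t$-sum zero, the group $G$ is isomorphic to the HNN extension of $H$ with stable letter $t$, associated subgroups generated by the subsets $\{x_n : n < n_{\max}\}$ and $\{x_n : n > n_{\min}\}$ of $\Xi_w$, and defining isomorphism the bijection $x_n \mapsto x_{n+1}$. Since $H$ is free by assumption, $\Xi_w$ (or a Nielsen-reduced refinement of it) is a free basis, so we are in the setting of an HNN extension of a free group where the associated subgroups are generated by subsets of a free basis.

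Next I would rewrite each prefix $p$ of $w$ in this HNN extension. The $t$-prefix positivity hypothesis guarantees that the running $t$-exponent sum across any initial segment of $w$ stays non-negative, which should yield a canonical HNN form for each prefix in which the stable letter appears, if at all, only on the ``positive'' side of the associated subgroups. The generators of $P$ therefore lie within a strongly constrained substructure of $G$: roughly, a bounded-depth piece of the Bass-Serre tree whose vertex pieces are controlled subsets of $H$. Given a candidate word $u$, the decision procedure would be to Britton-reduce $u$ to HNN normal form -- possible because membership in the associated subgroups is decidable, since they are generated by subsets of a free basis -- and then to check layer by layer whether the $H$-components arise from a product of prefixes; each such check reduces to membership of a word in a finitely generated subset of the free group $H$, which is decidable by Benois' theorem.

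The main obstacle lies in the second step: verifying that the prefix monoid really is captured by a finite-state pattern over the HNN layers, or equivalently that the $H$-components arising from products of prefixes form a rational (or otherwise decidable) subset of $H$. The Burns-Karrass-Solitar submonoid (\ref{eqn:undecSubmonProb}) shows that positivity alone does not buy this -- one can have a $t$-positive submonoid of an HNN extension of a free group with undecidable membership -- so the argument must critically exploit the fact that the generators of $P$ are prefixes of a single relator $w$, so that the concatenation patterns between prefixes are highly structured rather than arbitrary.
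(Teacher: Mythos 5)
First, a caveat: this statement is Theorem 7.2 of Dolinka and Gray, quoted in the introduction for context; the present paper does not prove it, so there is no in-paper proof to measure you against. Judged on its own terms, your skeleton (Moldavanski\u{\i} rewriting of $G$ as an HNN extension of $H$, Britton reduction, and an appeal to Benois' theorem inside the free group $H$) is the right general strategy, and it is the same philosophy that both Dolinka--Gray and the rest of this paper pursue. But the proposal has a genuine gap, and you have in effect flagged it yourself: the entire content of the theorem is the step you defer, namely showing that the prefix monoid is ``captured by a finite-state pattern over the HNN layers.'' Without that, nothing has been proved. There is also a concrete error en route: you assert that since $H$ is free, $\Xi_w$ ``(or a Nielsen-reduced refinement of it) is a free basis,'' so that the associated subgroups are generated by subsets of a free basis. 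That is false: $H = \gpPres[\rho_t(w)=1][\Xi_w]$ carries a nontrivial relation, so $\Xi_w$ is a generating set of cardinality strictly larger than the rank of $H$, and the subsets of $\Xi_w$ generating $A$ and $B$ need not sit inside any common free basis. (Indeed, arranging a free basis with this property is exactly the hard work done in the proof of Theorem \ref{thm:wtwWords} via the set $\Gamma$, and it is only possible there because of the special shape of the relator.) Fortunately the Benois route does not need the basis condition --- membership in a finitely generated subgroup of a free group is decidable regardless --- but the claim as written should be removed.

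The missing idea is how $t$-prefix positivity is actually used. Each prefix $p$ of $w$ has $\sigma_t(p) \geq 0$, so after pushing all stable letters to the right one can write $p = h_p \, t^{\sigma_t(p)}$ with $h_p$ a word over $\Xi_X$ whose subscripts are controlled; consequently the prefix monoid is generated by a finite set of elements of (a free overgroup of) $H$ together with \emph{nonnegative} powers of $t$ only. This is what rules out the back-and-forth behaviour of the Burns--Karrass--Solitar example: one never needs to invert the associated isomorphism, so an element of the prefix monoid admits a representation $h_0 t h_1 t \cdots t h_n$ in which every $t$-exponent is positive, and the set of $H$-components reachable after each layer is a rational subset of the free group $H$ --- rational subsets being closed under product, under intersection with the finitely generated subgroup $A$, and under the homomorphism $\phi$. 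That closure argument, carried out layer by layer against the Britton normal form of the input word and discharged by Benois at each stage (together with verifying that the associated subgroups are appropriately contained in, or compatible with, the prefix monoid, which is the hypothesis $A \cup B \subseteq M$ of Dolinka and Gray's Theorem C), is the proof; your proposal names the destination but does not make the journey.
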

This is, in fact, one way we can see the group in on the left of Equation \ref{eqn:undecSubmonProb} has decidable prefix membership problem (note that the presentation on the left in Equation \ref{eqn:undecSubmonProb} is $b$-sum zero and $b$-prefix positive).
The theorem leaves open the question of whether, under the same assumption that the group $H$ is free, groups defined by a relator where the word is $t$-sum zero but not $t$-prefix positive have decidable prefix membership problem.
Dolinka and Gray \cite[Example 7.6]{DolGra21} give the following simple example of a group whose prefix membership problem is left open and cannot be solved using their methods.
\begin{equation}
	\gpPres[bt^{-1}at^2bt^{-1}a = 1][a,b,t].
\end{equation}
As an application of the general results we prove in this paper, we will show that both this specific example and a family of similar groups have decidable prefix membership problem.

	We do this by developing theories of decidability of the membership problems within HNN extensions of free groups.
If we return to the example given above in Equation \ref{eqn:undecSubmonProb} we see that the presentation on the right is defined by a map from an associated subgroup generated by $x$ and $y$ to one generated by $xy$ and $y$.
Rather than this kind of HNN extension, we will instead be focusing on those where there is a bijection between two subsets of the same basis.

	In Section $2$ we describe a number of key concepts and results which will be referenced throughout the rest of the paper.
In Section $3$ we present a number of results about which words map to the same element within a particular kind of HNN extension of a free group and some algorithmic consequences of this.
In Section $4$ we show that certain submonoids of such a HNN extension of a free groups have decidable membership.
In Section $5$ we demonstrate that a certain family of groups have decidable prefix membership problem and thus that the corresponding inverse monoids have decidable word problem.

\section{Preliminaries}

\paragraph{Words and Elements}
Suppose that $X$ is an alphabet, that is a non-empty set of letters, then we use $X^\ast$ to represent the set of words written over that alphabet which includes the empty word, denoted by $1$.
If we combine this set of words with the operation of concatenation then we get the \textit{free monoid} on $X$.

	We let $\overline{X} = X \cup X^{-1}$, where $X^{-1} = \lbrace x^{-1} \mid x \in X \rbrace$.
For a word $w \in \freemon$ such that $w \equiv x_1^{\varepsilon_1} x_2^{\varepsilon_2} \ldots x_k^{\varepsilon_k}$ where $x_1 \in X$ we set
$w^{-1} \equiv x_k^{-\varepsilon_k} \ldots x_2^{-\varepsilon_2} x_1^{-\varepsilon_1}$.
This extends the obvious bijection between $X$ and $X^{-1}$ to one between $\freemon$ and itself.
If we quotient the free monoid on $\overline{X}$ by $xx^{-1} = 1$ this gives us a new object, the \textit{free group} on $X$, which we may denote by $\FG(X)$.

	We say that a set of words $U \subset \freemon$ is a \textit{free basis} for the free group $FG(X)$ if $\subGp{U} = \FG(X)$ and there is no non-trivial $U^\prime \subset U$ such that $\subGp{U^\prime} = \FG(X)$.
We call $|X|$ the \textit{rank} of the free group $\FG(X)$.
The following is a standard result (found, for instance, in \cite[Proposition I.2.7]{LynSch01})

\begin{theorem}	\label{thm:FGdefByRank}
	Let $F$ be a free group of finite rank $n$.
The free group $F$ cannot be generated by fewer than $n$ elements and if a set $U$ of $n$ elements generates $F$ then it is a free basis for $F$.
\end{theorem}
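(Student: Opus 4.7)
The plan is to prove the two assertions separately. The minimum-rank statement falls out of abelianization, while the claim that any generating set of size exactly $n$ is already a free basis requires the Hopfian property of finitely generated free groups.

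First, for the lower bound, I would pass to the abelianization $F^{\mathrm{ab}} = F/[F,F]$. For a free group of rank $n$ on a basis $X$ this is a free abelian group of rank $n$, i.e.\ isomorphic to $\mathbb{Z}^n$. Any $k$ elements generating $F$ descend to $k$ generators of $\mathbb{Z}^n$, and tensoring with $\mathbb{Q}$ turns this into a spanning set of $k$ vectors in the $n$-dimensional $\mathbb{Q}$-vector space $\mathbb{Q}^n$. The standard dimension bound then forces $k \geq n$.

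For the second claim, take $U = \{u_1, \dots, u_n\} \subseteq F$ generating $F$. Let $\FG(U)$ be the free group on $U$ treated as abstract symbols, and define $\psi \colon \FG(U) \to F$ by sending each symbol to the corresponding element of $F$; since $U$ generates $F$, the map $\psi$ is surjective. Because both $\FG(U)$ and $F$ are free of rank $n$, fixing any bijection between $U$ and a basis $X$ of $F$ yields an isomorphism $\theta \colon F \to \FG(U)$, so $\psi \circ \theta \colon F \to F$ is a surjective endomorphism. The plan is then to invoke the Hopfian property of finitely generated free groups to conclude that $\psi \circ \theta$, and hence $\psi$, is an automorphism. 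Since $U$ is a free basis of $\FG(U)$ by construction, its image under the isomorphism $\psi$, namely $U$ viewed inside $F$, is a free basis of $F$.

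The main obstacle is the Hopfian property itself, which is not purely formal. The standard route is via residual finiteness: a finitely generated free group is residually finite (shown, for example, directly from the Nielsen--Schreier subgroup theorem by producing, for each non-identity element, a finite-index normal subgroup that avoids it, or by embedding into $GL_2(\mathbb{Z})$ and applying Malcev's theorem on $GL_n(\mathbb{Z})$), and a classical result of Malcev states that any finitely generated residually finite group is Hopfian. Since these facts are classical and appear in the same reference \cite{LynSch01} cited for the theorem itself, I would appeal to them rather than reprove them.
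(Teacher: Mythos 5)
Your proposal is correct, but note that the paper does not actually prove this statement: it is quoted as a standard result with a citation to Lyndon and Schupp (Proposition I.2.7), so there is no internal proof to compare against. Your argument is essentially the classical textbook one that sits behind that citation: the lower bound on the number of generators via the abelianization $F^{\mathrm{ab}}\cong\mathbb{Z}^n$ and a dimension count after tensoring with $\mathbb{Q}$, and the second assertion via a surjective endomorphism $\psi\circ\theta\colon F\to F$ together with the Hopfian property of finitely generated free groups (itself obtained from residual finiteness plus Malcev's theorem). Both halves are sound, and deferring Hopficity/residual finiteness to the literature is reasonable given that the paper itself only cites this theorem. One small remark: the paper's stated definition of ``free basis'' (a generating set with no proper generating subset) is weaker than the standard notion of a set that freely generates $F$; under the paper's literal definition the second claim would already follow from the first by a counting argument, whereas your Hopficity argument establishes the stronger, standard conclusion that $U$ freely generates $F$ --- which is what is actually needed where the theorem is later applied (in the proof of Theorem \ref{thm:wtwWords}, to conclude that $U_H$ is a free basis in the genuine sense). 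So your proof is not only correct but proves the version of the statement the paper really uses.
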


	We may define other groups by presentation, by which we mean that
\begin{equation*}
	\gpPres[u_i = v_i \, (i \in I)]
\end{equation*}
is equal to the group $\FG(X)$ quotiented by the relations $u_i = v_i$.

	We define the \textit{free inverse monoid} on $X$ as $\freemon$ quotiented by the relations $ww^{-1}w = w$ and $uu^{-1}vv^{-1} = vv^{-1}uu^{-1}$ for all $u,v,w \in \freemon$.
Similar to groups we may then define inverse monoids by presentation and say that
\begin{equation*}
	\invPres[u_i = v_i \, (i \in I)]
\end{equation*}
is the free inverse monoid on $X$ further quotiented by the given relations, for more on combinatorial inverse semigroup theory we refer the reader to \cite{Pet84} and \cite{Law98}.

\paragraph{Factorisations}

	Let $M$ be an inverse monoid.
We say that a $u \in \freemon$ is a \textit{right unit} in $M$ if there is some $v_r \in \freemon$ such that $u v_r = 1$ in $M$.
Similarly we say that it is a \textit{left unit} if there is some $v_l \in \freemon$ such that $v_l u = 1$ in $M$.
We say that $u$ is a \textit{unit} if $u$ is a left unit and a right unit.

	Let $M = \invPres[r_i = 1 \, (i \in I)]$ be a special inverse monoid.
If there are is a decomposition of the $r_i$ into component parts, i.e. there are $r_{i,j} \in \freemon$ such that $r_i \equiv r_{i,1} r_{i,2} \ldots r_{i,k_i}$ for each $i \in I$, then we call this a \textit{factorisation} of the relators and the $r_{i,j}$ the \textit{factor words}.
We say that a factorisation is \textit{unital} in $M$ if every factor word is a unit in $M$ (our language here is taken from Dolinka and Gray in \cite{DolGra21}).

	Let $w \equiv x_1 x_2 \ldots x_k$, where $x_i \in \overline{X}$ for $1 \leq i \leq k$.
We use $\pref(w)$ to denote the set of prefixes of $w$ (including $1$ and $w$ itself).
That is
\begin{equation*}
	\pref(w) = \lbrace 1, \, x_1, \, x_1 x_2, \, \ldots, \, x_1 x_2 \ldots x_k \rbrace.
\end{equation*}

	Let $G = \gpPres[r_i = 1 \, (i \in I)]$ be a group.
We call
\begin{equation*}
	P = \subGp{\bigcup_{i \in I} \pref(r_i)} \leq G
\end{equation*}
the \textit{prefix monoid} of $G$.
The prefix monoid is sensitive to the presentation of $G$ used.
Perhaps the simplest example of this is $\gpPres[aab = 1][a,b]$ and $\gpPres[aba = 1][a,b]$, both of which are isomorphic to $\mathbb{Z}$ under the mapping $a \mapsto 1$, $b \mapsto -2$ but whose prefix monoids under that same mapping are $\mathbb{Z}$ and $\mathbb{N}$ respectively.

	We say that a factorisation of the relators of $G$ is \textit{conservative} in $G$ if the prefixes of the factors generate the prefix monoid (again we are taking our language from \cite{DolGra21}).
That is 
\begin{equation*}
	\subGp{\bigcup_{i \in I} \bigcup_{1 \leq j \leq k_i} \pref(r_{i,j})}
	= \subGp{\bigcup_{i \in I} \pref(r_i)}
	\leq G
\end{equation*}
where the factorisation is $r_i \equiv r_{i,1} r_{i,2} \ldots r_{i,k_i}$ for each $i \in I$.

	These seemingly disparate notions are connected by the following result of Dolinka and Gray \cite[Theorem 3.3.(i)]{DolGra21}.
\begin{lemma}	\label{lem:unitalIsCon}
	Let $M = \invPres[r = 1]$ be a special one-relator inverse monoid and $G = \gpPres[r = 1]$ be its maximal group image.
Any factorisation of $r$ which is unital in $M$ is also conservative in $G$.
\end{lemma}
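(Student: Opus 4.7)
The plan is to prove the two inclusions defining conservativity separately, writing the unital factorisation as $r \equiv r_1 r_2 \cdots r_k$ with each $r_j$ a unit in $M$. I will treat the prefix monoid $P$ as a submonoid of $G$, in keeping with the terminology and with the preceding $\mathbb{Z}/\mathbb{N}$ example; under the subgroup reading the statement is automatic from $p = (r_1 \cdots r_{j-1})^{-1}(r_1 \cdots r_{j-1} p)$ and makes no use of the unitality hypothesis at all.

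The inclusion $\subGp{\pref(r)} \subseteq \subGp{\bigcup_j \pref(r_j)}$ is immediate: every prefix of $r$ has the form $r_1 \cdots r_{j-1} p$ with $p \in \pref(r_j)$ for some $j$, and each $r_l$ itself lies in $\pref(r_l)$. For the reverse inclusion it suffices to verify that $\pref(r_j) \subseteq P$ for each $j$, and this is where unitality will enter.

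My strategy for that will rest on the characterisation, standard for special inverse monoids, identifying $P$ with the image in $G$ of the right units of $M$. One direction is straightforward: for $r \equiv pq$ we have $p(qr^{-1}) = r r^{-1} = 1$ in $M$, so every prefix of $r$ is a right unit, and then so is every product of such. The converse — that every right unit of $M$ lies in $P$ — is the substantive content, and I would prove it via Stephen's Sch\"utzenberger graph of the $\mathcal{R}$-class of the identity of $M$: for special inverse monoids its vertex set projects onto $P$ in $G$, and a word labels a closed path at the base vertex precisely when it is a right unit. Granting that identification, the proof finishes in one line: a prefix of a right unit is again a right unit (if $r_j \equiv pq$ with $r_j v = 1$ in $M$, then $p(qv) = 1$ in $M$), so each unit $r_j$ has all its prefixes among the right units, hence in $P$.

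The main obstacle is the hard direction of that right-unit/prefix-monoid identification; this is where the inverse-monoid structure of $M$ is genuinely used, and where I expect any technical work to concentrate. Everything else reduces to combinatorial bookkeeping about prefixes.
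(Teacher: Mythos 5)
The paper does not actually prove this lemma: it is imported verbatim from Dolinka and Gray (their Theorem 3.3(i)) with a citation and no argument, so there is no in-paper proof to compare against. Your proposal is, as far as I can tell, essentially the standard argument from that source, and it is correct modulo the one black box you explicitly flag. The easy inclusion via $r_1\cdots r_{j-1}p$ is fine, as is the observation that prefixes of $r$ and prefixes of right units are right units; and your decision to read the prefix monoid as $\subMon{\pref(r)}$ rather than the subgroup the paper's notation $\subGp{\cdot}$ literally gives is the right call (the paper's own $\mathbb{Z}$-versus-$\mathbb{N}$ example only makes sense for the submonoid reading, and, as you say, the subgroup reading makes the lemma vacuous). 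The substantive input --- that the image in $G$ of the set of right units of $M$ is exactly the prefix monoid --- is indeed a known result going back to Ivanov--Margolis--Meakin and Stephen's theory of Sch\"utzenberger automata, and it is precisely the lemma Dolinka and Gray prove before deducing this statement. One small correction to your sketch of it: a word $u$ is a right unit iff $uu^{-1}=1$ iff $u$ labels a path in the Sch\"utzenberger graph of $1$ \emph{starting} at the base vertex, not a \emph{closed} path at the base vertex (closed paths at the base vertex correspond to words lying above $1$ in the natural partial order); what your argument actually needs is this together with the fact that the vertex set of that graph projects onto $P$ in $G$, so the slip does not damage the plan. Finally, note that your argument only uses that each factor $r_j$ is a \emph{right} unit, so you in fact prove a slightly stronger statement than the one quoted; this is consistent with the source and is not a defect.
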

We briefly note that this can be extended to presentations with multiple relators (as the author has done in \cite{War24}).

\paragraph{HNN extensions}

	If we have a group $H = \gpPres$ with two subgroups $A$ and $B$ which are isomorphic under the map $\phi$.
Then we can construct a group
\begin{equation*}
	H \ast_{t, \phi:A, B} =
	\gpPres[R, \, t^{-1} a t = \phi(a) \quad (a \in A)][X \cup \ltr]
\end{equation*}
which we call a \textit{HNN extension} of $H$, while we call $t$ the \textit{stable letter}, $A$ and $B$ the associated subgroups and $\phi$ the associated isomorphism.

	We say that a word $w$ is in \textit{HNN reduced form} if is of the following form
\begin{equation*}
	w_0 t^{\varepsilon_1} w_1 \ldots t^{\varepsilon_k} w_k
\end{equation*}
where $\varepsilon_i \in \lbrace -1, 1 \rbrace$ for $1 \leq i \leq k$, $w_i \in \freemon$ for $0 \leq i \leq k$ and no subwords are of the form either $t^{-1} w_i t$ where $w_i \in A$ or $t w_i t^{-1} \in B$.
The following result is standard and can be proved by application of \cite[Lemma IV.2.3]{LynSch01} and Britton's Lemma.

\begin{lemma}	\label{lem:equalRedForms}

	Let $H^\ast = H \ast_{t, \phi: A \rightarrow B}$ be a HNN extension.
Then equality of between two words in HNN reduced form
\begin{equation*}
	g_0 t^{\varepsilon_1} g_1 t^{\varepsilon_2} g_2 \ldots t^{\varepsilon_n} g_n
	=
	h_0 t^{\delta_1} h_1 t^{\delta_2} h_2 \ldots t^{\delta_n} h_n
\end{equation*}
holds in the HNN extension if and only if
$n = m$,
$\varepsilon_i = \delta_i$ for all $1 \leq i \leq n$
and there exist $1 = \alpha_0, \alpha_1, \ldots, \alpha_n, \alpha_{n+1} = 1 \in A \cup B$
where $\alpha_i \in A$ if $\varepsilon_i = -1$ and $\alpha_i \in B$ if $\varepsilon_i = 1$
such that
\begin{equation*}
	h_i = \alpha_i^{-1} g_i (t^{\varepsilon_{i+1}} \alpha_{i + 1} t^{\varepsilon_{i + 1}})
\end{equation*}
for all $0 \leq i \leq n$.

\end{lemma}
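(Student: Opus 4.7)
The plan is to proceed by induction on $n$ (taking $n \leq m$ without loss of generality) using Britton's Lemma, which is the content of \cite[Lemma IV.2.3]{LynSch01}. The key manoeuvre is to move everything to one side and rewrite the claimed equation as
\begin{equation*}
	h_m^{-1} t^{-\delta_m} h_{m-1}^{-1} \cdots t^{-\delta_1} h_0^{-1} g_0 t^{\varepsilon_1} g_1 \cdots t^{\varepsilon_n} g_n = 1.
\end{equation*}
Because the associated subgroups $A$ and $B$ are closed under inversion, the word $h_m^{-1} t^{-\delta_m} \cdots t^{-\delta_1} h_0^{-1}$ obtained by inverting one of the original HNN reduced words is itself HNN reduced. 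Hence the only position in the combined word at which a $t$-pinch (a subword of the form $t^{-1} a t$ with $a \in A$ or $t b t^{-1}$ with $b \in B$) can possibly occur is the junction $t^{-\delta_1} h_0^{-1} g_0 t^{\varepsilon_1}$.

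For the base case $n = 0$ the combined word is HNN reduced in its entirety; Britton's Lemma then forces $m = 0$, and the statement reduces to $g_0 = h_0$ in $H$, with $\alpha_0 = \alpha_1 = 1$. For the inductive step, if $n \geq 1$ the combined word has positive $t$-length, so by Britton's Lemma it must fail to be HNN reduced; the only option is a pinch at the junction. This forces $\delta_1 = \varepsilon_1$ (since the two $t$-exponents framing the junction must be opposite in sign) and places $h_0^{-1} g_0$ in $A$ or $B$ according to $\varepsilon_1$. Denoting this element $\alpha_1^{\pm 1}$ (as dictated by the convention in the statement) and invoking $t^{-1} a t = \phi(a)$ to absorb $\phi^{\pm 1}(\alpha_1)$ across $t^{\varepsilon_1}$ into the adjacent factors, we obtain a strictly shorter equation of HNN reduced words, to which the inductive hypothesis applies and yields the remaining $\alpha_2,\ldots,\alpha_n$.

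The main obstacle will be the bookkeeping around the conjugating elements: each pinched element transforms via $\phi^{\pm 1}$ as it crosses $t^{\varepsilon_i}$ and is then absorbed into the next group factor, producing the interpolating sequence $1 = \alpha_0, \alpha_1, \ldots, \alpha_{n+1} = 1$ in the conclusion. Care is needed to ensure each $\alpha_i$ lies in the associated subgroup appropriate to the sign of $\varepsilon_i$, and that the recursive formula relating the $h_i$ to the $g_i$ is stated on the correct side. The converse direction is routine: substituting the stated expression for each $h_i$ into the right-hand side and telescoping with the HNN relations collapses the successive $\alpha_i$'s against the intervening $t^{\pm 1}$'s and recovers the left-hand side.
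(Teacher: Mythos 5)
Your argument is the standard Britton's Lemma induction applied to $v^{-1}u=1$ (noting that the inverse of a reduced word is reduced, so the only possible pinch is at the junction), which is precisely the route the paper itself indicates: it offers no proof of this lemma, stating only that it follows from \cite[Lemma IV.2.3]{LynSch01} together with Britton's Lemma. The sketch is sound; the remaining work you flag (tracking how each pinched element is pushed through $t^{\varepsilon_i}$ via $\phi^{\pm 1}$ to produce the interpolating $\alpha_i$ with the correct $A$/$B$ membership) is exactly the bookkeeping carried out in the cited reference.
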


	There are also known circumstances under which we can find a reduced form by algorithm, see \cite[Pages 184-185]{LynSch01} for a description of the process.

\begin{lemma}	\label{lem:redFormByAlg}

	Let $H^\ast = H \ast_{t, \phi: A \rightarrow B}$ where $H$, $A$ and $B$ are all finitely generated, $H$ has recursively enumerable word problem and membership in $A$ and $B$ within $H$ are decidable.
Then there is an algorithm which takes a word $w \in \freemon[X \cup \ltr]$ as input and returns a HNN reduced form word
\begin{equation}
	w^\prime \equiv w_0 t^{\varepsilon_1} w_1 \ldots t^{\varepsilon_n} w_n
\end{equation}
where $w_i \in \freemon$ for $0 \leq i \leq n$, $\varepsilon_i \in \lbrace -1, 1 \rbrace$ for $1 \leq i \leq n$ and $w = w^\prime$ in $H^\ast$.

\end{lemma}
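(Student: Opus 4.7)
The plan is to iteratively eliminate \emph{pinches}, i.e.\ subwords of the form $t^{-1} u t$ with $u \in A$ or $t u t^{-1}$ with $u \in B$, until none remain. First I would syntactically split the input word into the form
\begin{equation*}
	w \equiv w_0 t^{\varepsilon_1} w_1 \ldots t^{\varepsilon_n} w_n
\end{equation*}
with each $w_i \in \freemon$ and $\varepsilon_i \in \lbrace -1, 1 \rbrace$; this is immediate since $t$ and $t^{-1}$ are distinguished letters of the alphabet.

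Next I would scan the indices $1 \leq i < n$ and apply the decidability of membership in $A$ and $B$ within $H$ (a hypothesis of the lemma) to check whether $w_i$ lies in the appropriate subgroup for the surrounding $t^{\pm 1}$'s to form a pinch. If no index yields a pinch then the current word is already in HNN reduced form and we return it.

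Otherwise, having located a pinch, say $t^{-1} w_i t$ with $w_i \in A$, I need to explicitly exhibit a word in $\freemon$ representing $\phi(w_i)$. Fix once and for all finite generating words $a_1, \ldots, a_k \in \freemon$ for $A$ together with their images $\phi(a_1), \ldots, \phi(a_k) \in \freemon$ (these exist since $A$ is finitely generated and $\phi$ is part of the data for $H^\ast$). I would then dovetail two processes: enumeration of all words $v$ over the alphabet $\lbrace a_1^{\pm 1}, \ldots, a_k^{\pm 1} \rbrace$, and, for each such $v$, a semi-decision of $v = w_i$ in $H$ using the recursively enumerable word problem of $H$. Since $w_i \in A$, some $v$ must succeed; take the corresponding word $\phi(v)$ obtained by formal substitution of each $a_j$ by $\phi(a_j)$, which equals $\phi(w_i)$ in $H$. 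Replace the pinch by this word and iterate. The symmetric case of a pinch $t w_i t^{-1}$ with $w_i \in B$ is handled analogously using $\phi^{-1}$.

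Termination is immediate as each rewrite strictly decreases the number of $t^{\pm 1}$ occurrences, and correctness follows because each rewrite applies a defining relation of $H^\ast$. The principal obstacle is the step computing $\phi(w_i)$: decidability of membership in $A$ only tells us that $w_i \in A$, not how to express $w_i$ in the chosen generators of $A$; it is precisely the combination of the r.e.\ word problem in $H$ and finite generation of $A$ that allows a dovetailed search to unearth such an expression in a finite amount of time.
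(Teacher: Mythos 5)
Your proof is correct and follows essentially the same route as the paper, which does not prove this lemma itself but defers to the standard pinch-elimination process in Lyndon and Schupp (pp.\ 184--185). You have correctly identified and resolved the one genuine computability subtlety --- producing an explicit expression for $w_i$ over the generators of $A$ via a dovetailed search using the recursively enumerable word problem, so that $\phi(w_i)$ can be written down --- and the rest (detecting pinches via decidable membership in $A$ and $B$, termination by the decreasing count of $t^{\pm 1}$'s) is exactly the cited argument.
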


\paragraph{Hierarchy}

	For a given letter $y \in Y$ we can define a function $\sigma_y(w): \freemon \rightarrow \mathbb{Z}$, which returns the number of appearances of $y$ in $w \in \freemon$ less the number of appearances of $y^{-1}$ in $w$.
We call this total the \textit{exponent sum} of $y$ in $w$ and say that $w$ has $y$ \textit{exponent sum zero} if $\sigma_x(w) = 0$.
For instance $\sigma_y(xy^2x^{-1}y^{-1}zy^{-1}) = 2 - 1 - 1 = 0$.

	Let $X$ be an alphabet, then we can define a second alphabet,
\begin{equation*}
	\Xi_X = \lbrace x_i \, \mid x \, \in \overline{X}, \, i \in \mathbb{Z} \rbrace
\end{equation*}
and a function
\begin{equation*}
	\rho_t : \lbrace w \in (\overline{X} \cup \lttr)^\ast \mid 
	w \text{ is cyclically reduced} \text{ and } \sigma_t(w) = 0 \rbrace
	\rightarrow \Xi_X
\end{equation*}
for a given $t$.
This defined as follows;
let $Y = X \cup \ltr$,
$w \equiv y_1 y_2 \ldots y_n$ where $y_i \in Y$, for $1 \leq i \leq n$,
and finally let $p_i \equiv y_1 y_2 \ldots y_i$ then if $y_i \in \lttr$ it is mapped to $1$ and if $y_i \equiv x \in X$ it is mapped to $x_{-\sigma_t(p_i)} \in \Xi_X$.
For example
\begin{equation*}
	\rho_t(yt^{-1}xt^2yt^{-1}x) \equiv y_0 x_1 y_{-1} x_0.
\end{equation*}
The inverse of this function simply sends each $x_i$ to $t^{-i} x t^i$, for instance
\begin{equation*}
	\rho_t(y_0 x_1 y_{-1} x_0)
	\equiv (t^{-0} y t^0) (t^{-1} x t^1) (t^{-(-1)} y t^{-1}) (t^{-0} x t^0).
	= yt^{-1}xt^2yt^{-1}x.
\end{equation*}

	For any particular word $w$ we can define $\mu_x$ and $m_x$ to be, respectively, the minimum and maximum values of $i$ such that $x_i$ appears in $\rho_t(w)$.
If $x$ does not appear at all in $w$ then we say that $\mu_x = 0 = m_x$.
For instance in our above example $\mu_y = -1$ and $m_y = 0$.

	Using these we can define a subset of $\Xi_X$,
\begin{equation*}
	\Xi_w = \lbrace x_i \in \Xi_X \mid \mu_x \leq i \leq m_x \rbrace
\end{equation*}
so in our running example $\Xi_w = \lbrace x_0, x_1, y_{-1}, y_0 \rbrace$.
However in general it need not be the case that all the letters of $\Xi_X$ appear in $\rho_t(w)$, for instance if $X = \lbrace x, y, z \rbrace$ and $w = yxt^{-2}xt^2$ we have $\Xi_w = \lbrace x_0, x_1, x_2 \rbrace$ and $\rho_t(w) = \lbrace x_0, x_1, x_2, y_0, z_0 \rbrace$.

	With these ideas in place we introduce the following result, originally due to Moldavanski\u i, a proof can be found within \cite[Theorem IV.5.1]{LynSch01} which uses the methods of McCool and Schupp.
\begin{theorem}	\label{thm:HNNextByRho}
	Let $w \in \freemon$ be a word with $t$ exponent sum zero such that $\rho_t(w)$ is cyclically reduced and let $G = \gpPres[w = 1]$ be a group.
Then $G$ is an HNN extension of the group
\begin{equation*}
	H = \gpPres[\rho_t(w) = 1][\Xi_w]
\end{equation*}
where the associated subgroups $A$ and $B$ are free and generated by $\Xi_w \setminus \lbrace x_{m_x} \mid x \in X \rbrace$ and $\Xi_w \setminus \lbrace x_{\mu_x} \mid x \in X \rbrace$ respectively and the associated isomorphism is defined by $\phi(x_i) = x_{i+1}$ for $x \in X$ and $\mu_x \leq i < m_x$.
\end{theorem}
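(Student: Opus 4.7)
The plan is to realise the group $G = \gpPres[w = 1]$ as an HNN extension by a sequence of Tietze transformations applied to its presentation, and then to verify that the object obtained really satisfies the HNN extension defining data. First, for each $x \in X$ and each integer $i$ with $\mu_x \leq i \leq m_x$ I would introduce a new generator $x_i$ together with the defining relation $x_i = t^{-i} x t^i$. This is a Tietze transformation of the standard type and does not alter the group.

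Next, I would use these defining relations to rewrite $w = 1$ as $\rho_t(w) = 1$. The key calculation is that when a letter $x$ occurs at a position with cumulative $t$-exponent $\sigma_t(p_i)$ in $w$, substituting $x = t^{\sigma_t(p_i)} x_{-\sigma_t(p_i)} t^{-\sigma_t(p_i)}$ produces exactly the $t$-powers needed to fuse with the stable letters on either side; the assumption that $\sigma_t(w) = 0$ guarantees that all such $t$-powers cancel globally. Thus the relation $w = 1$ can be replaced by $\rho_t(w) = 1$ written over $\Xi_w$. I would then replace the defining relations $x_i = t^{-i} x t^i$ by the equivalent family consisting of $x_{\mu_x} = t^{-\mu_x} x t^{\mu_x}$ (one for each $x \in X$) together with the relations $t^{-1} x_i t = x_{i+1}$ for $\mu_x \leq i < m_x$, and use the former to eliminate each original generator $x$. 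The resulting presentation is
\begin{equation*}
   \gpPres[\rho_t(w) = 1, \; t^{-1} x_i t = x_{i+1} \; (x \in X, \; \mu_x \leq i < m_x)][\Xi_w \cup \ltr],
\end{equation*}
which has the exact shape of the standard presentation of the claimed HNN extension $H \ast_{t, \phi: A \to B}$.

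The final task, and I expect this to be the main obstacle, is to verify that this formal presentation really is an HNN extension, i.e.\ that $A$ and $B$ are genuine subgroups of $H$, each free on the specified generating set, and that $\phi$ extends to an isomorphism between them. For this I would invoke the Magnus Freiheitssatz for the one-relator group $H = \gpPres[\rho_t(w) = 1][\Xi_w]$: since $\rho_t(w)$ is cyclically reduced by hypothesis and the removed sets $\{x_{m_x} \mid x \in X\}$ and $\{x_{\mu_x} \mid x \in X\}$ each include at least one letter that actually appears in $\rho_t(w)$ (namely the extremal-subscript occurrences for each $x$ appearing in $w$), the Freiheitssatz yields that $\Xi_w \setminus \{x_{m_x} \mid x \in X\}$ and $\Xi_w \setminus \{x_{\mu_x} \mid x \in X\}$ each freely generate free subgroups of $H$. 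Because $\phi$ is defined as a bijection between these two free bases, it then extends uniquely to an isomorphism $A \to B$, and the Tietze-transformed presentation is genuinely an HNN extension of $H$ in the sense defined earlier.
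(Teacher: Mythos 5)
Your argument is correct and is essentially the proof the paper itself relies on: the paper does not prove this theorem but cites Moldavanski\u i via \cite[Theorem IV.5.1]{LynSch01}, where the McCool--Schupp argument proceeds exactly as you describe --- Tietze transformations introducing the generators $x_i = t^{-i} x t^{i}$ to rewrite the relator as $\rho_t(w)$, followed by the Freiheitssatz (applicable because $\rho_t(w)$ is cyclically reduced and each removed set $\lbrace x_{m_x} \rbrace$, $\lbrace x_{\mu_x} \rbrace$ omits a letter genuinely occurring in $\rho_t(w)$) to see that the two specified generating sets freely generate the associated subgroups, so that $\phi$ extends to an isomorphism $A \to B$. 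One cosmetic slip: since $x_i = t^{-i} x t^{i}$, the substitution at a position with prefix sum $\sigma_t(p_i)$ should read $x = t^{-\sigma_t(p_i)} x_{-\sigma_t(p_i)} t^{\sigma_t(p_i)}$ rather than with the opposite signs, but this does not affect the argument.
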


	This is easiest to understand through examples

\begin{example}
	If $X = \lbrace x, y \rbrace$ and $w \equiv yt^{-1}xt^2yt^{-1}x$ then
\begin{equation*}
	H = \gpPres[y_0 x_1 y_{-1} x_0 = 1][x_0, x_1, y_{-1}, y_0],
\end{equation*}
$A = \subGp{x_0, y_{-1}}$, $B = \subGp{x_1, y_0}$ and $\phi$ is the extension of the function sending $x_0$ to $x_1$ and $y_{-1}$ to $y_0$.
\end{example}

\begin{example}
	If $X = \lbrace x, y, z \rbrace$ and $w \equiv yxt^{-2}xt^2x$ then 
\begin{equation*}
	H = \gpPres[x_0 x_2 = 1][ x_0, x_1, x_2, y_0, z_0],
\end{equation*}
$A = \subGp{x_0, x_1}$, $B = \subGp{x_1, x_2}$ and $\phi$ is the extension of the function sending $x_0$ to $x_1$ and $x_1$ to $x_2$.
\end{example}

\section{Sequences of Elementary Operations}

	In this section we will examine a particular class of HNN extensions of free groups.
We will show that these preserve a degree of freeness in the sense that they have a consistent notion of cancellation within the reduced forms.

	Throughout this section we will assume that $H$ is a free group over an alphabet $X$ which has a free basis $U_H \subset \freemon$.
We will also assume that $A$ and $B$ are isomorphic subgroups of $H$ which are generated by $U_A, U_B \subseteq U_H$ respectively and that the isomorphism between them, which we call $\phi$, produces a bijection between $\oua$ and $\oub$.
Finally we will also be assuming that $H^\ast = H \ast_{t, \phi:A \rightarrow B}$.

	We begin by noting that such groups always have decidable word problem by \cite[Corollary IV.2.2]{LynSch01}.
Next make the following observations
\begin{lemma}	\label{lem:monVMgpVG}
	Let $V_G \subseteq U_H$ and $V_M \subseteq \ouh$ be non-empty sets.
Then we have the following:
\begin{enumerate}
	\item The sets $V_M \cap \overline{V_G}$ and $V_M \cap \subGp{V_G}$ respresent the same elements in $H$
	\item The sets $(V_M \cap \overline{V_G})^\ast$ and $\subMon{V_M} \cap \subGp{V_G}$ represent the same elements in $H$
\end{enumerate}
\end{lemma}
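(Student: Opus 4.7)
The plan is to prove the two parts in turn, with part (2) leveraging the same principal observation as part (1). The key tool throughout is that $V_G \subseteq U_H$, being a subset of a free basis of $H$, is itself a free basis of the subgroup $\subGp{V_G}$; this is immediate from Theorem~\ref{thm:FGdefByRank} since $V_G$ generates $\subGp{V_G}$ and a free group of rank $|V_G|$ cannot be generated by fewer elements. The consequence I will exploit is that the unique reduced form over $\ouh$ of any element of $\subGp{V_G}$ in fact uses only letters from $\overline{V_G}$; this follows by comparing reduced forms in the two free bases $V_G$ and $U_H$.

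For part (1), the inclusion $V_M \cap \overline{V_G} \subseteq V_M \cap \subGp{V_G}$ is immediate from $\overline{V_G} \subseteq \subGp{V_G}$. For the reverse, I will take $v \in V_M \cap \subGp{V_G}$; since $V_M \subseteq \ouh$, the word $v$ is a single letter $u^{\varepsilon}$ with $u \in U_H$ and $\varepsilon \in \lbrace -1, 1 \rbrace$, which is already reduced. The key consequence then forces $u \in V_G$, giving $v \in \overline{V_G}$.

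For part (2), one direction is routine: any word over $V_M \cap \overline{V_G}$ is simultaneously a word over $V_M$, so represents an element of $\subMon{V_M}$, and a product of elements of $\overline{V_G} \subseteq \subGp{V_G}$, so represents an element of $\subGp{V_G}$. For the reverse, I will take $g \in \subMon{V_M} \cap \subGp{V_G}$ and write a factorisation $g \equiv w_1 w_2 \ldots w_n$ with each $w_i \in V_M \subseteq \ouh$. Performing free reduction on this word in $H$ yields a reduced word whose letters form a subsequence of $w_1, \ldots, w_n$, hence all lie in $V_M$; the key consequence now forces each surviving letter to lie in $\overline{V_G}$, so the resulting reduced word lies in $(V_M \cap \overline{V_G})^\ast$ and still represents $g$ in $H$.

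I do not anticipate any significant obstacle: the whole argument is internal to the free group $H$ and rests on the uniqueness of free reduction together with the elementary fact that a subset of a free basis is itself a free basis for the subgroup it generates. No HNN-extension machinery is needed for this preliminary lemma.
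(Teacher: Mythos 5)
Your proposal is correct and follows essentially the same route as the paper: both arguments are purely about the free group $H$ and rest on the uniqueness of reduced forms over the free basis $U_H$, using that a reduced word over $\overline{V_G}$ is already reduced over $\ouh$ (the paper phrases part (1) as a contradiction and part (2) as comparing the reductions of $w_M \in V_M^\ast$ and $w_G \in \freemon[V_G]$, which is exactly your ``key consequence'' applied to the reduction of the $V_M$-word). No gap; your packaging of the common observation up front is just a cleaner presentation of the same argument.
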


\begin{proof}
	$(1)$:
Suppose there exists some $v \in (V_M \cap \subGp{V_G}) \setminus (V_M \cap \overline{V_G})$.
This may easily be found to be equivalent to $v \in (V_M \cap \subGp{V_G}) \setminus \overline{V_G}$.
That $v \in \subGp{V_G}$ implies that there is some word $w \in \freemon[V_G]$ such that $w = v$ in $H$.
However $v \in V_M \setminus \overline{V_G}$, so this implies that an element of the free basis $U_H$ can be written in terms of different members of the free basis in $H$, which is a contradiction.
Therefore $V_M \cap \subGp{V_G}$ represent a subset of the elements that $V_M \cap \overline{V_G}$ do in $H$.
The converse is obvious from the definitions.

	$(2)$:
Suppose that $x \in \subMon{V_M} \cap \subGp{V_G}$.
That $x \in \subMon{V_M}$ implies there is a word $w_M \in V_M^\ast$ such that $w_M = x$ in $H$.
Similarly, that $x \in \subGp{V_G}$ implies there is a word $w_G \in \freemon[V_G]$ such that $w_G = x$ in $H$.
As $w_H$ and $w_G$ are words written over $\ouh$, $U_H$ is a free basis for $H$ and $w_M = x = w_G$ in $H$ we may conclude that both $w_M$ and $w_G$ reduce over $U_H$ to the same word written over $\ouh$.
The resulting word, $w$, will satisfy $w \in (V_M \cap \overline{V_G})^\ast$.
Thus we may conclude that $\subMon{V_M} \cap \subGp{V_G} \subseteq (V_M \cap \overline{V_G})^\ast$.
The converse is obvious from the defintions.

\end{proof}

\begin{remark}
	In the notation used here $\emptyset^\ast$ consists solely of the empty word.
Hence for part $(2)$ in the case where the intersection of $V_M$ and $\overline{V_G}$ is empty the set $(V_M \cap \overline{V_G})^\ast$ represents only the identity element in $H$.
\end{remark}

	If we let $V_G = U_A$ and $V_M = U_H$ then part $(1)$ of Lemma \ref{lem:monVMgpVG} tells us that $U_H \cap A = U_H \cap \oua = U_A$.
Similarly we may conlude that $U_H \cap B = U_B$.

	Suppose we have a word $w$ written over $\overline{X} \cup \lttr$ in the following form
\begin{equation*}
	w \equiv w_0 t^{n_1} w_1 \ldots t^{n_k} w_k,
\end{equation*}
where $w_i \in \freemon$ and $n_i \in \mathbb{Z}$.
As $U_H$ is a free basis for $\FG(X)$ every $w_i$ may be uniquely rewritten as a word $w_i^\prime \in \ouh^\ast$ such that $w_i^\prime = w_i$ in $H$ and that $w_i^\prime$ is reduced in terms of $U_H$.
Applying this to every $w_i$ of $w$ yields a unique rewriting
\begin{equation*}
	w^\prime \equiv w_0^\prime t^{n_1} w_1^\prime \ldots t^{n_k} w_k^\prime.
\end{equation*}
Further, this is such that if our initial $w$ is in HNN reduced form in $H^\ast$ then so is $w^\prime$.
Thus together with Lemma \ref{lem:redFormByAlg} we can algorithmically rewrite any word over $X \cup \lttr$ as a word over $\ouh \cup \lttr$ which is in reduced form in $H^\ast$.

	 We list three \textit{elementary operations}, which can be applied to a given word written over $\overline{U_H} \cup \lttr$ to produce another word written over $\overline{U_H} \cup \lttr$.

\begin{itemize}
	\item \textit{Elementary Addition}, the insertion of some pair $u u^{-1}$ into the word, where $u \in \ouh$, in such a way that the result is still written over $\ouh \cup \lttr$;
	\item \textit{Elementary Cancellation}, the removal of some pair $u u^{-1}$ from the word, where $u \in \ouh$;
	\item \textit{Elementary Semicommutation}, either substitution of $u_a t$ for $t u_b$ or vice versa or the substitution of $t^{-1} u_a$ for $u_b t^{-1}$ or vice versa, where $u_a \in U_A$, $u_b \in U_B$ and $\phi(u_a) = u_b$.
\end{itemize}

	We will now demonstrate that these are sufficient under our assumptions to serve as the discrete units to describe a transformation between any two words both equal and in HNN reduced form in $H^\ast$.

\begin{lemma}	\label{lem:equalMeansSeqOfElOps}

	Let $w, w^\prime \in \freemon[U_H \cup \ltr]$ be two words which are in HNN reduced form in $H^\ast$.
The words $w$ and $w^\prime$ are equal in $H^\ast$ if and only if there is a sequence of elementary operations which turns $w$ into $w^\prime$.

\end{lemma}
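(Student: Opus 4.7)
The ``if'' direction is routine: each elementary operation preserves the element of $H^\ast$ represented. Insertions and removals of $uu^{-1}$ hold as identities already in the free group $H$; semicommutations $u_a t \leftrightarrow t u_b$ and $t^{-1} u_a \leftrightarrow u_b t^{-1}$ (with $\phi(u_a) = u_b$) are direct consequences of the defining HNN relation $t^{-1} u_a t = u_b$.

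For the ``only if'' direction, my plan is to apply Lemma \ref{lem:equalRedForms} to extract conjugating elements $1 = \alpha_0, \alpha_1, \ldots, \alpha_n, \alpha_{n+1} = 1$ linking $w \equiv w_0 t^{\varepsilon_1} w_1 \cdots t^{\varepsilon_n} w_n$ to $w^\prime \equiv w_0^\prime t^{\varepsilon_1} w_1^\prime \cdots t^{\varepsilon_n} w_n^\prime$, and then to realise that relationship by an explicit sequence of elementary operations. For each $i$, choose a reduced word $\beta_i$ over $\oua$ (when $\varepsilon_i = -1$) or over $\oub$ (when $\varepsilon_i = 1$) representing $\alpha_i$; such a $\beta_i$ exists because $U_A$ and $U_B$ freely generate $A$ and $B$, a consequence of the standing assumption that $U_H$ is a free basis for $H$. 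The transformation then proceeds in three stages: (i) insert $\beta_i \beta_i^{-1}$ just after each stable letter $t^{\varepsilon_i}$ by a sequence of elementary additions; (ii) push each inserted $\beta_i$ leftwards across the adjacent $t^{\varepsilon_i}$, converting $t^{\varepsilon_i} \beta_i$ into $\gamma_i t^{\varepsilon_i}$ where $\gamma_i$ is the corresponding word obtained from $\beta_i$ under the basis-bijection; and (iii) freely reduce each resulting block sitting between stable letters.

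The technical core is stage (ii). The explicit semicommutation rule applies directly only to positive letters of $\beta_i$; for a negative letter $u_a^{-1}$ (taking the case $\varepsilon_i = -1$ for concreteness), one uses the composite move: insert $\phi(u_a)^{-1} \phi(u_a)$ at the left of $t^{-1} u_a^{-1}$, apply the single-letter semicommutation $\phi(u_a) t^{-1} \to t^{-1} u_a$ in the middle, then cancel the resulting $u_a u_a^{-1}$ to arrive at $\phi(u_a)^{-1} t^{-1}$. Iterating letter by letter along $\beta_i$ realises the full push as a sequence of elementary operations, and the case $\varepsilon_i = 1$ is symmetric.

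After stages (i) and (ii) the word has the form $W_0 t^{\varepsilon_1} W_1 \cdots t^{\varepsilon_n} W_n$, with $W_0 = w_0 \gamma_1$, $W_n = \beta_n^{-1} w_n$, and $W_i = \beta_i^{-1} w_i \gamma_{i+1}$ for $1 \leq i < n$. By Lemma \ref{lem:equalRedForms} each $W_i$ equals $w_i^\prime$ in $H$, and since $U_H$ is a free basis for $H$ the unique reduced form of $W_i$ over $\ouh$ is precisely $w_i^\prime$; hence a sequence of elementary cancellations within each block produces $w^\prime$. The main obstacle is thus the bookkeeping in stage (ii) --- verifying that every negative-letter push really does assemble from the three listed elementary operations --- while the remainder consists of direct appeals to Lemma \ref{lem:equalRedForms} and the uniqueness of reduced forms in the free group $H$.
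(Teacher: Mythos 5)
Your proof is correct, but it takes a genuinely different route from the paper. The paper's proof is indirect: it chooses a bijection $\theta$ from $U_H$ to a fresh alphabet $Y$, uses the fact that $U_H$ is a free basis to extend $\theta$ to an isomorphism from $H^\ast$ onto an HNN extension $K$ whose associated subgroups are generated by subsets of the generating \emph{letters}, and then asserts that in $K$ the statement is ``obvious,'' pulling the resulting sequence of moves back through $\theta^{-1}$. Your argument is direct and constructive: you invoke Lemma \ref{lem:equalRedForms} to obtain the connecting elements $\alpha_i$, express each as a reduced word $\beta_i$ over $\oua$ or $\oub$ (legitimate, since $U_A, U_B \subseteq U_H$ are parts of a free basis, so $A$ and $B$ are free on them), and then explicitly assemble the insert/push/reduce sequence, including the correct composite move (addition, one semicommutation, cancellation) needed to push an \emph{inverse} letter $u_a^{-1}$ past a stable letter --- a detail the paper's ``obvious'' step quietly absorbs. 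In effect you have supplied the proof of the claim the paper declines to spell out in $K$, so your version is more self-contained; the paper's version is shorter and makes transparent that the result is really a statement about letters rather than basis words. Two small points: the displayed formula in Lemma \ref{lem:equalRedForms} as printed has $t^{\varepsilon_{i+1}} \alpha_{i+1} t^{\varepsilon_{i+1}}$, which does not lie in $H$; you are implicitly (and correctly) reading it as $t^{\varepsilon_{i+1}} \alpha_{i+1} t^{-\varepsilon_{i+1}}$, i.e.\ $\phi^{\pm 1}(\alpha_{i+1})$. And your final step assumes each $w_i^\prime$ is freely reduced over $\ouh$; if it is not, cancellations alone will not reach it, but since elementary additions are also available you can cancel each $W_i$ to the unique reduced form and then build $w_i^\prime$ back up, so nothing is lost.
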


\begin{proof}

	Let $Y$ be a set of letters of size $|U_H|$.
As $X$ and $U_H$ are both bases of $H$, we know that $|X| = |U_H|$ so $|Y| = |X|$ also.
Further let $\theta: U_H \rightarrow  Y$ be a bijection.
As $U_H$ is a basis for $H = \FG(X)$ this extends to a bijection $\theta: \FG(X) \rightarrow \FG(Y)$.

	We may write that
\begin{equation*}
	H^\ast = \gpPres[t^{-1}ut = \phi(u) \quad (u \in U_A)][X, t].
\end{equation*}
If we let $Y_A = \theta(U_A)$ and $Y_B = \theta(U_B)$ then we may likewise define the following group
\begin{equation*}
	K = \gpPres[s^{-1}\theta(u)s = \theta(\phi(u)) \quad (u \in U_A)][Y, s]
\end{equation*}
This $K$ is isomorphic to $H^\ast$ under an extension of $\theta$ which also sends $t$ to $s$.

	That $\theta$ is an isomorphism implies $\theta(w) = \theta(w^\prime)$ in $K$ if and only if $w = w^\prime$ in $H^\ast$.
In $K$ it is obvious that two words in HNN reduced form are equal if and only if some sequence of composed of: insertions of pairs of the form $yy^{-1}$ where $y \in \overline{Y}$; cancellations of pairs of the form $yy^{-1}$ where $y \in \overline{Y}$ and substitutions of the form $y_a t$ for $t y_b$ or $t^{-1} y_a$ for $y_b t^{-1}$ or vice versa where $\phi( \theta^{-1}(y_a)) = \theta^{-1}(y_b)$.
Such a sequence easily translates to a sequence of elementary operations in $H^\ast$.
Therefore $w = w^\prime$ in $H^\ast$ if and only if there is a sequence of elementary operations between $w$ and $w^\prime$.
\end{proof}

\begin{remark}
	At this stage it is worth observing that all the remaining results in this section and Theorem \ref{thm:submonMemInHast} could be proved in $K$ and then transferred by the isomorphism $\theta^{-1}$ to $H^\ast$.
However, Theorem \ref{thm:wtwWords}, which is one of the main results of the paper, needs to work even when the factorisation is into subwords which are not letters.
With this in mind, we feel it is clearer to assume throughout that our elementary operations are acting on the words of a free basis rather than letters.
\end{remark}

It is usual practice, as we have done so far, to represent a word $w$ in reduced HNN form in the following form $w_0 t^{\varepsilon_1} w_1 \ldots t^{\varepsilon_n} w_n$, where each $w_i \in \freemon$ may be empty but the $\varepsilon_i$ are either $1$ or $-1$.
However for the remainder of this section we will instead use
\begin{equation*}
	w \equiv t^{n_0} u_1 t^{n_1} \ldots u_j t^{n_j} \ldots u_k t^{n_k},
\end{equation*}
where $u_i \in U_H$ for $1 \leq k$ and $n_j \in \mathbb{Z}$ for $0 \leq j \leq k$, thus prioritising consistent labelling of the individual elements of the free basis over consistent labelling of the individual stable letters.

\begin{remark}
	Though all semicommutations can be described as a replacement of some $t^{n_{j-1}} u_j t^{n_j}$ with either $t^{n_{j-1} + 1} \phi(u_j) t^{n_j - 1}$ or $t^{n_{j-1} - 1} \phi^{-1}(u_j) t^{n_j + 1}$ this \textbf{does not} mean that all such replacements represent semicommutations.
For instance, the replacement of $t^0 u_a t^0$ with $t \phi(u_a) t^{-1}$, for some $u_a \in \oua$ does not represent an elementary semicommutation.
Note that the former is in HNN reduced form while the latter is not.
\end{remark}

	We will use $v \sim v^\prime$ to indicate that there is a finite sequence of elementary semicommutations between $v$ and $v^\prime$.

\begin{lemma}	\label{lem:basisWordShift}

	Suppose we have two words, $w, w^\prime \in \freemon[U_H \cup \ltr]$, in HNN reduced form that are written
\begin{equation*}
	w \equiv t^{n_0} u_1 t^{n_1} \ldots u_j t^{n_j} \ldots u_k t^{n_k}
\end{equation*}
and
\begin{equation*}
	w^\prime \equiv t^{n_0^\prime} u_1^\prime t^{n_1^\prime} \ldots u_j^\prime t^{n_j^\prime} \ldots u_{k^\prime}^\prime t^{n_{k^\prime}^\prime}.
\end{equation*}
where $u_j, u_j^\prime \in \ouh$ and $n_j, n_j^\prime \in \mathbb{Z}$,

	If $w \sim w^\prime$ then $k = k^\prime$ and $u_j^\prime \equiv \phi^{z_j} ( u_j )$ where $z_j = \Sigma_{i=0}^{j-1} (n_i^\prime - n_i)$, for $1 \leq j \leq k$.
Furthermore this means that the set $W = \lbrace w^\prime \mid w^\prime \sim w \rbrace$ is finite and can be found by algorithm.

\end{lemma}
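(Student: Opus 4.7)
The plan is to prove the two structural identities by induction on the length $n$ of a sequence of elementary semicommutations $w = v_0, v_1, \ldots, v_n = w^\prime$. The base case $n = 0$ is immediate: $n_i^\prime = n_i$ for all $i$, so every $z_j = 0$ and $\phi^0(u_j) = u_j = u_j^\prime$. For the inductive step I consider the single elementary semicommutation taking $v_{n-1}$ to $v_n$. There are four possible types, and each is shown to preserve the joint invariant $u_i^\prime \equiv \phi^{z_i}(u_i)$ for $1 \leq i \leq k$, as well as the value of $k$ itself (since no semicommutation adds or removes a basis letter, which already gives $k = k^\prime$).

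Consider for concreteness the substitution $u_a t \to t u_b$ applied at position $j$, which requires the current $u_j$ to lie in $\oua$ and $n_j \geq 1$. It converts $t^{n_{j-1}} u_j t^{n_j}$ into $t^{n_{j-1}+1} \phi(u_j) t^{n_j - 1}$ and leaves every other block untouched. The partial sums $z_i$ for $i < j$ are therefore unaltered, $z_j$ grows by exactly one (the $+1$ contribution from $n_{j-1}$), and for $i > j$ the $+1$ from $n_{j-1}$ cancels with the $-1$ from $n_j$ in the sum, so these $z_i$ are also unchanged. Combined with the replacement of $u_j$ by $\phi(u_j)$, this preserves the invariant. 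The remaining three semicommutations are handled by symmetric book-keeping: in each case exactly one $z_j$ shifts by $\pm 1$ and the corresponding $u_j$ is replaced by $\phi^{\pm 1}(u_j)$.

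For the second assertion, each admissible $u_j^\prime$ must lie in the $\phi$-orbit of $u_j$, and iterating $\phi$ (respectively $\phi^{-1}$) requires staying inside $\oua$ (respectively $\oub$) at every step, so, since $U_H$ is finite, this orbit is a finite interval of $\mathbb{Z}$. Hence only finitely many tuples $(z_1, \ldots, z_k)$ are admissible, and each such tuple determines the $n_j^\prime$ via the telescoping relation $n_j^\prime = n_j + z_{j+1} - z_j$, taking $z_0 = z_{k+1} = 0$; the latter equation reflects the fact that semicommutations preserve the total $t$-exponent of $w$. This shows $W$ is finite, and it can be enumerated by a breadth-first search from $w$ in which every applicable elementary semicommutation is applied to every word already discovered, with new outputs added to the frontier; termination is guaranteed by the finiteness of $W$ and applicability is effectively checkable because $\oua$ and $\oub$ are finite. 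The main obstacle is really just the careful case analysis in the inductive step, but once one of the four types has been worked through the others are essentially mechanical.
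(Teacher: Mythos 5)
Your proof of the structural identity ($k=k'$ and $u_j'\equiv\phi^{z_j}(u_j)$) is correct and is essentially the paper's argument: a single-step case analysis showing that exactly one $z_j$ shifts by $\pm 1$ while the others are unchanged, followed by induction on the length of the semicommutation sequence. That part needs no further comment.

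The finiteness argument, however, has a genuine gap. You claim that the set of exponents $n$ for which $\phi^{n}(u_j)$ is defined is a finite interval of $\mathbb{Z}$ ``since $U_H$ is finite.'' Finiteness of $U_H$ only gives that the orbit of $u_j$ is finite \emph{as a set of elements}; it does not bound the set of admissible exponents. The hypotheses of Section 3 permit $U_A = U_B$ with $\phi$ restricting to a permutation of $\oua$ (nothing in the setup excludes this), in which case $\phi^{n}(u_j)$ is defined for every $n\in\mathbb{Z}$ and your ``finite interval'' is all of $\mathbb{Z}$. Then ``only finitely many tuples $(z_1,\ldots,z_k)$ are admissible'' fails, your telescoping relation produces infinitely many candidate words, and the finiteness of $W$ --- and with it the termination of your breadth-first search --- is not established. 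The missing ingredient is a bound on the $n_i'$ themselves rather than on the $\phi$-orbits: the paper observes that every $w'\in W$ is an HNN reduced form equal to $w$ in $H^\ast$, so by Britton's lemma (or directly, because each elementary semicommutation of a reduced word preserves the multiset of occurrences of $t$ and $t^{-1}$) the quantity $N=\sum_{i}|n_i'|$ is the same for all of $W$; there are only finitely many integer tuples with $\sum_i|n_i'|=N$, and each tuple determines $w'$ given $w$. With that substitution your enumeration procedure goes through unchanged.
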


\begin{proof}

	As semicommutation does not change the number of basis words present we immediately have that $k = k^\prime$.

	Suppose that difference between $w$ and $w^\prime$ is a single elementary semicommutation centred around the $j$th position of $w$.
Then looking at the definition we can see that it must have one of two effects,either $t^{n_{j-1}} u_j t^{n_j}$ is replaced by $t^{n_{j-1} + 1} \phi(u_j) t^{n_j - 1}$ or by $t^{n_{j-1} - 1} \phi^{-1}(u_j) t^{n_j + 1}$.

	We can see that $z_i = z_{i-1} + n_i^\prime - n_i$ for $1 < i \leq k$.
Therefore $z_i = z_{i-1} = 0$ for all $i < j-1$ as we have not changed anything left of $t^{n_{j-1}}$.
Looking at our two cases we see that $n_{j-1}^\prime - n_{j-1}$ will be equal to $-1$ and $1$ respectively as required.
Further as $n_{j-1} + n_j = n_{j-1}^\prime + n_j^\prime$ in both cases we can see that
\begin{equation*}
	z_j = z_{j-2} + (n_{j-1}^\prime - n_{j-1}) + (n_j^\prime - n_j) = z_{j-2} = 0.
\end{equation*} 
As we have also not changed anything to the right of $t^{n_j}$ either we can also conclude that $z_i = 0$ for $j < i$ as well.
Thus we see that the formula correctly finds that $u_i^\prime = \phi^0(u_i) \equiv u_i$ when $i \neq j-1$.

	Suppose that we have a sequence of $K$ elementary semicommutations between $w$ and $w^\prime$ for which the lemma's statement holds.
Further suppose we have $w^\dop$ which is a single elementary semicommutation away from $w^\prime$.
We say that
\begin{equation*}
	w^\dop \equiv t^{n_0^\dop} u_1^\dop t^{n_1^\dop} \ldots u_j^\dop t^{n_j^\dop} \ldots u_k^\dop t^{n_k^\dop}.
\end{equation*}
We know by the previous section of the proof that $u_j^\dop \equiv \phi^{z_j^\prime}(u_j^\prime)$ where $z_j^\prime = \Sigma_{i=0}^{j-1} (n_i^\dop - n_i^\prime)$.
Further we know by inductive assumption that $u_j^\prime \equiv \phi^{z_j}(u_j)$ where $z_j = \Sigma_{i=0}^{j-1} (n_i^\prime - n_i)$.
Combining this we can see that $u_j^\dop \equiv \phi^{z_j + z_j^\prime}(u_j)$ and that
\begin{equation*}
	z_j + z_j^\prime = 
	\Sigma_{i=0}^{j-1} (n_i^\prime - n_i)
	+
	\Sigma_{i=0}^{j-1} (n_i^\dop - n_i^\prime)
	=
	\Sigma_{i=0}^{j-1} (n_i^\dop - n_i),
\end{equation*}
which means the sequence between $w$ and $w^\dop$, and therefore all sequences of length $K+1$, satisfy the lemma's statement.

	Thus we can conclude by induction that the first part of the Lemma's statement holds for sequences of any finite length.

	Let $N$ be the total number of instances of $t$ and $t^{-1}$ in $w$.
This means that $N = \Sigma_{i=0}^k |n_i|$.
We have established above that the entirety of any word $w^\prime \in W$ can be determined by knowing the $n_i^\prime$ values.
As by assumption $w$ and $w^\prime$ are both in HNN reduced form the total number of instances of $t^{-1}$ and $t$ must be the same in both.
Therefore $N = \Sigma_{i=0}^k |n_i^\prime|$ also, for every $w^\prime \in W$.
There are only a finite number of ways to partition $N$ into $n_0^\prime, n_1^\prime, \ldots, n_k^\prime$ and then by multiplying this figure by a finite amount, to account for choice of sign, it is possible to produce a finite limit on the number of possible sets of choices for $n_i^\prime$.
Thus there a finite number of $w^\prime$ such that $w^\prime \sim w$.
As only a finite number of elementary semicommutations are possible from any particular $w^\prime$ it is possible for an algorithm to find every member of $W$ by an exhaustive search.

\end{proof}

\begin{example}
	Let $H = \gpPres[abc = 1][a,b,c,t]$, $U_H = \lbrace a, b \rbrace$, $U_A = \lbrace a \rbrace$, $U_B = \lbrace b \rbrace$ and $\phi$ which extends $a \mapsto b$.
This gives us the HNN extension $H^\ast = \gpPres[abc = 1, t^{-1}at = b][a,b,c,t]$.
Consider the word $w \equiv bt^{-2}at^{-1}a$.
We may write this as $w \equiv t^{0} b t^{-2} a t^{-1} a t^{0}$.
One semicommutation transforms this to $t^{0} b t^{-2 + 1} \phi(a) t^{-1 - 1} a t^{0} = t^{0} b t^{-1} b t^{-2} a t^{0}$.
While a second consectutive semicommutation gives $t^{0 - 1} \phi^{-1}(b) t^{-1 + 1} b t^{-2} a t^{0} = t^{-1} a t^{0} b t^{-2} a t^{0}$.
\end{example}

	Now that we have seen there is a limit to the extent a word, and in particular its basis words, can change under a sequence of elementary semicommutations we will now examine to what extent elementary cancellations are invariant under sequences of elementary semicommutations.

	Before this we define the following pieces of notation:

	Let $w, v \in \freemon[U_H \cup \ltr]$ be two words in HNN reduced form.
We will use $w \rightarrow_m v$ to specify that there is a single elementary cancellation between the $m$th and $m+1$th basis words of $w$ which transforms it into $v$.
That is if
\begin{equation*}
	w \equiv t^{n_0} u_1 t^{n_1}
	\ldots
	t^{n_{m-1}} u_m t^{n_m} u_{m+1} t^{n_{m+1}}
	\ldots
	u_k t^{n_k}
\end{equation*}
then $u_{m+1} \equiv u_m^{-1}$, $n_m = 0$ and
\begin{equation*}
	v \equiv t^{n_0} u_1 t^{n_1}
	\ldots
	t^{n_{m-1} + n_{m+1}}
	\ldots
	u_k t^{n_k}.
\end{equation*}

\begin{lemma}	\label{lem:mCancelSemicommutingSquare}

	Let $w, w^\prime, v, v^\prime \in (U_H \cup \lbrace t, t^{-1} \rbrace )^\ast$ be words in HNN reduced form.
If $v \leftarrow_m w \sim w^\prime \rightarrow_m v^\prime$ then $v \sim v^\prime$.

\end{lemma}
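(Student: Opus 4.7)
My plan is to induct on the length $N$ of the given semicommutation sequence $w = w^{(0)} \to w^{(1)} \to \cdots \to w^{(N)} = w'$. The base case $N = 0$ is immediate: $w \equiv w'$ forces $v \equiv v'$, so $v \sim v'$. For the inductive step, I would analyse the first semicommutation $w \to w^{(1)}$ based on the position $j$ at which it occurs.

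If $j \notin \{m, m+1\}$, this semicommutation does not touch $u_m$, $u_{m+1}$, or $n_m$, so $w^{(1)}$ still admits the cancellation at position $m$; call the result $v^{(1)}$. A direct check then shows that $v \to v^{(1)}$ is itself a single elementary semicommutation, at the same index if $j < m$, at index $j - 2$ if $j > m+1$, and acting on the merged $t$-block $n_{m-1} + n_{m+1}$ (in place of $n_{m-1}$ or $n_{m+1}$) when $j \in \{m-1, m+2\}$. The inductive hypothesis applied to the shorter sub-sequence $w^{(1)} \sim w'$ then gives $v^{(1)} \sim v'$, and hence $v \sim v'$.

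The delicate case is $j \in \{m, m+1\}$, where the semicommutation changes $n_m$ from $0$ to $\pm 1$ and thereby destroys the cancellation structure of $w^{(1)}$. Since $n_m^{(0)} = n_m^{(N)} = 0$, the sequence of values $n_m^{(i)}$ must return to $0$; let $i^*$ be the earliest positive index with $n_m^{(i^*)} = 0$. By Lemma \ref{lem:basisWordShift} the basis letters at positions $m$ and $m+1$ in $w^{(i^*)}$ again form an inverse pair (because the shifts $z_m^{(i^*)}$ and $z_{m+1}^{(i^*)}$ differ by $n_m^{(i^*)} = 0$), so $w^{(i^*)}$ admits a cancellation giving some $v^{(i^*)}$, and induction then applies to the sub-sequence $w^{(i^*)} \sim w'$. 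It remains to show $v \sim v^{(i^*)}$. The key observation is that within the excursion $w \to \cdots \to w^{(i^*)}$, the semicommutations at positions $m$ and $m+1$ contribute cancelling amounts to $n_{m-1} + n_{m+1}$: a position-$m$ semicommutation adds $\pm 1$ to $n_{m-1}$ and $\mp 1$ to $n_m$, while a position-$(m+1)$ semicommutation adds $\pm 1$ to $n_m$ and $\mp 1$ to $n_{m+1}$, so the constraint that $n_m$ returns to $0$ forces their combined contribution to $n_{m-1} + n_{m+1}$ to vanish. Thus only the interleaved semicommutations at positions outside $\{m, m+1\}$ have any net effect on $v$, and they translate directly. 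The main technical obstacle is verifying the $t$-adjacency conditions for the translated semicommutations at positions $m-1$ or $m+2$: in $w$ the required $t^{\pm 1}$ is provided by the sign of $n_{m-1}$ or $n_{m+1}$, but in $v$ we see only $n_{m-1} + n_{m+1}$, in which contributions of opposite sign can cancel. I anticipate resolving this by appealing to the alternative form of the elementary semicommutation (which uses a $t^{\pm 1}$ on the other side of the affected basis letter) or by inserting compensating semicommutations at other positions that do not affect the endpoint.
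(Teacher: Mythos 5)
Your proposal rests on exactly the two observations that drive the paper's proof: a semicommutation at position $m$ or $m+1$ preserves the sum $n_{m-1}+n_m+n_{m+1}$ and therefore leaves the contracted word untouched, while a semicommutation at any other position translates across the contraction. The paper packages this more economically than you do. For \emph{every} intermediate word $w_i$ of the chain it defines $v_i$ formally, by deleting the $m$th and $(m+1)$th basis letters and merging the three surrounding $t$-blocks into $t^{n_{i,m-1}+n_{i,m}+n_{i,m+1}}$, without asking whether $w_i$ itself admits a cancellation at $m$; it then checks that consecutive $v_i$ are identical when the move is at $m$ or $m+1$ and differ by the translated move otherwise. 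This makes your first-return index $i^{*}$ and the excursion bookkeeping unnecessary (that bookkeeping is correct, and your appeal to Lemma \ref{lem:basisWordShift} to see that $w^{(i^{*})}$ again admits the cancellation is fine), but it does not change the substance of the argument.

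The obstacle you flag at the end is the real crux, and as written your proposal does not close it; note also that neither of your suggested escapes works in general, since the ``other side'' $t^{\pm 1}$ at position $m+2$ need not exist (the block $n_{m+2}$ may be empty). The resolution is that the bad configuration simply cannot occur. At every intermediate stage one has $u_{i,m+1} \equiv \phi^{n_{i,m}}(u_{i,m}^{-1})$; this is the content of Lemma \ref{lem:rsaMeansNiceIndices}, whose proof uses only Lemma \ref{lem:basisWordShift} and not the present lemma, so there is no circularity. Now suppose a move at position $m+2$ consumes a $t^{-1}$ from the block $t^{n_{i,m+1}}$, so $n_{i,m+1} \leq -1$, while the merged exponent $n_{i,m-1}+n_{i,m}+n_{i,m+1}$ is $\geq 0$; then $n_{i,m-1}+n_{i,m} \geq 1$. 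If $n_{i,m} \geq 1$ then $u_{i,m+1} \in \oub \subseteq B$ and $w_i$ contains the pinch $t\,u_{i,m+1}\,t^{-1}$; if $n_{i,m} = 0$ then $n_{i,m-1} \geq 1$ and $w_i$ contains $t\,u_{i,m}u_{i,m}^{-1}\,t^{-1}$ with $u_{i,m}u_{i,m}^{-1} = 1 \in B$; if $n_{i,m} \leq -1$ then $n_{i,m-1} \geq 2$ and $w_i$ contains $t\,u_{i,m}\,t^{-1}$ with $u_{i,m} \equiv \phi^{-n_{i,m}}(u_{i,m+1}^{-1}) \in \oub$. Each case contradicts the fact that the words of a semicommutation chain between HNN reduced words are themselves HNN reduced; the remaining sign patterns and the position $m-1$ are symmetric. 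So the translated move is always a legitimate elementary semicommutation on the contracted word. You should be aware that the paper's own proof asserts this last point without verification, so the pinch argument above is needed to complete either write-up; with it supplied, your proof goes through.
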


\begin{proof}

	As $w \sim w^\prime$ there must be a finite sequence $w_0, w_1, \ldots, w_k$ such that $w \equiv w_0$, $w^\prime \equiv w_k$ and the difference between $w_i$ and $w_{i+1}$ is a single elementary semicommutation, for $0 \leq i < k$.
Suppose that
\begin{equation*}
	w_i \equiv t^{n_{i,0}} u_{i,1} t^{n_{i,1}}
	\ldots
	t^{n_{i,m-1}} u_{i,m} t^{n_{i,m}} u_{i,m+1} t^{n_{i,m+1}}
	\ldots
	u_{i,k} t^{n_{i,k}}
\end{equation*}
and
\begin{equation*}
	v_i \equiv t^{n_{i,0}} u_{i,1} t^{n_{i,1}}
	\ldots
	t^{n_{i,m-1} + n_{i,m} + n_{i,m+1}}
	\ldots
	u_{i,k} t^{n_{i,k}}.
\end{equation*}
Under this definition $v \equiv v_0$ and $v^\prime \equiv v_k$.

	Further we claim that, for $0 \leq i < k$, either $v_i$ and $v_{i+1}$ are equal or there is a single elementary semicommutation between them.
The semicommutation between $w_i$ and $w_{i+1}$ can be represemted by the replacement of $t^{n_{i,j-1}} u_{i,j} t^{n_{i,j}}$ with $t^{n_{i,j-1} + \varepsilon} \phi^\varepsilon (u_{i,j}) t^{n_{i,j} - \varepsilon}$ for some $1 \leq j \leq k$ and $\varepsilon \in \lbrace -1, 1 \rbrace$.
If $0 \leq j < m$ or $m + 1 < j \leq k$ then the same replacement can be performed to get from $v_i$ to $v_{i+1}$ by a single elementary semicommutation.
If $j = m$ then $v_i \equiv v_{i+1}$ as $(n_{i,m-1} + \varepsilon) + (n_{i, m} - \varepsilon) + n_{i,m+1} = n_{i,m-1} + n_{i,m} + n_{i,m+1}$.
Similarly $v_i \equiv v_{i+1}$ if $j = m+1$.

	As each entry in the sequence $v_0, v_1, \ldots, v_k$ is either equal to or a single semicommutation away from the next one it follows that $v \equiv v_0 \sim v_k \equiv v^\prime$.

	We include a representation of this proof in Figure \ref{fig:mEqual}.

\end{proof}

\begin{figure}
\[\begin{tikzcd}
	{w_0} & {w_1} & {w_2} & {w_3} & \ldots & {w_{\ell}} \\
	\\
	v & {v_1} & {v_2} & {v_3} & \ldots & {v_{\ell}}
	\arrow["\sim"{description}, draw=none, from=1-1, to=1-2]
	\arrow["m"', from=1-1, to=3-1]
	\arrow["\sim"{description}, draw=none, from=1-2, to=1-3]
	\arrow[dashed, from=1-2, to=3-2]
	\arrow["\sim"{description}, draw=none, from=1-3, to=1-4]
	\arrow[dashed, from=1-3, to=3-3]
	\arrow["\sim"{description}, draw=none, from=1-4, to=1-5]
	\arrow[dashed, from=1-4, to=3-4]
	\arrow["\sim"{description}, draw=none, from=1-5, to=1-6]
	\arrow["m", from=1-6, to=3-6]
	\arrow["\equiv"{description}, draw=none, from=3-1, to=3-2]
	\arrow["\sim"{description}, draw=none, from=3-2, to=3-3]
	\arrow["\sim"{description}, draw=none, from=3-3, to=3-4]
	\arrow["\equiv"{description}, draw=none, from=3-4, to=3-5]
	\arrow["\sim"{description}, draw=none, from=3-5, to=3-6]
\end{tikzcd}\]

\caption{	\label{fig:mEqual}
The solid arrows represent cancellation at $m$, same as in the text.
The dashed arrows denote the ``cancellation'' between the $w_i$ and their respective $v_i$.
The particular sequence of $\equiv$ and $\sim$ between the $v_i$ is purely illustrative.
}
\end{figure}
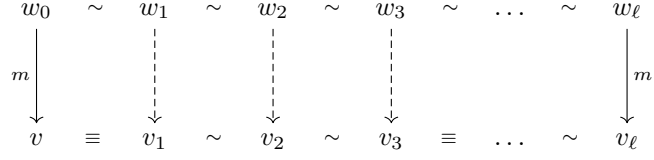

	Let $w, v \in \freemon[U_H \cup \ltr]$ be words in HNN reduced form.
We will use $w \rightsquigarrow_m v$ to indicate that there are HNN reduced form words $w^\prime, v^\prime \in \freemon[U_H \cup \ltr]$ such that $w \sim w^\prime \rightarrow_m v^\prime \sim v$.

\begin{lemma}	\label{lem:rsaMeansNiceIndices}

	Let $v, w \in \freemon[( U_H \cup \ltr )]$ be words in HNN reduced form such that $w \rightsquigarrow_m v$.
If
\begin{equation*}
	w \equiv t^{n_0} u_1 t^{n_1}
	\ldots
	t^{n_{m-1}} u_m t^{n_m} u_{m+1} t^{n_{m+1}}
	\ldots
	u_k t^{n_k}
\end{equation*}
where $u_j \in \ouh$ and $n_j \in \mathbb{Z}$, then $u_{m+1} \equiv \phi^{n_m} (u_m^{-1})$.

\end{lemma}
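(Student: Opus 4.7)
The plan is to unfold the definition of $\rightsquigarrow_m$ and then apply Lemma \ref{lem:basisWordShift} at the two positions $m$ and $m+1$; the key observation is that the condition $n'_m = 0$ coming from the cancellation forces the relevant exponent difference to equal exactly $-n_m$, which yields the stated formula.

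More concretely, by the definition of $w \rightsquigarrow_m v$ there exist HNN reduced words $w', v' \in \freemon[U_H \cup \ltr]$ with
$w \sim w' \rightarrow_m v' \sim v$. Writing
\begin{equation*}
    w' \equiv t^{n'_0} u'_1 t^{n'_1} \ldots u'_k t^{n'_k},
\end{equation*}
the step $w' \rightarrow_m v'$ tells us exactly that $n'_m = 0$ and $u'_{m+1} \equiv (u'_m)^{-1}$. (Note that $w \sim w'$ preserves the number of basis words by Lemma \ref{lem:basisWordShift}, so there is no re-indexing to worry about.)

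Now I would invoke Lemma \ref{lem:basisWordShift} applied to $w \sim w'$ to obtain
\begin{equation*}
    u'_j \equiv \phi^{z_j}(u_j), \qquad z_j = \sum_{i=0}^{j-1}(n'_i - n_i),
\end{equation*}
for every $1 \le j \le k$. Specialising to $j = m$ and $j = m+1$ and combining with $u'_{m+1} \equiv (u'_m)^{-1}$ gives
\begin{equation*}
    \phi^{z_{m+1}}(u_{m+1}) \equiv (u'_m)^{-1} \equiv \phi^{z_m}(u_m^{-1}).
\end{equation*}
Since $\phi$ is a bijection between $\oua$ and $\oub$ (and all the iterates in sight are defined, precisely because the semicommutations producing $w'$ were legal), we may apply $\phi^{-z_{m+1}}$ to both sides to get $u_{m+1} \equiv \phi^{z_m - z_{m+1}}(u_m^{-1})$. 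Finally, the difference $z_{m+1} - z_m = n'_m - n_m = -n_m$ (using $n'_m = 0$) yields $u_{m+1} \equiv \phi^{n_m}(u_m^{-1})$, as required.

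There is no serious obstacle here: Lemma \ref{lem:basisWordShift} has already done all the tracking of how semicommutations shift basis letters by powers of $\phi$, and the only genuine content of the present lemma is the bookkeeping that the cancellability condition $n'_m = 0$ at position $m$ of $w'$ telescopes the telescoping sums $z_m, z_{m+1}$ into the single index $n_m$ appearing in the original word $w$.
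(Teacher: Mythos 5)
Your proposal is correct and follows essentially the same route as the paper: unfold the definition of $\rightsquigarrow_m$, use the cancellability conditions $n'_m = 0$ and $u'_{m+1} \equiv (u'_m)^{-1}$, and apply Lemma \ref{lem:basisWordShift} at positions $m$ and $m+1$ to telescope $z_m - z_{m+1}$ down to $n_m$. No gaps; the bookkeeping matches the paper's argument exactly.
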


\begin{proof}

	That $w \rightsquigarrow_m v$ implies that there are $w^\prime, v^\prime \in \freemon[U_H \cup \ltr]$ in HNN reduced form such that $w \sim w^\prime \rightarrow_m v^\prime \sim v$.
We may write
\begin{equation*}
	w^\prime \equiv t^{n_0^\prime} u_1^\prime t^{n_1^\prime} \ldots u_j^\prime t^{n_j^\prime} \ldots u_{k}^\prime t^{n_{k}^\prime}.
\end{equation*}
and as cancellation is possible at $m$ we know that $(u_m^\prime)^{-1} \equiv u_{m+1}^\prime$ and $n_{m}^\prime = 0$.
Further we know by Lemma \ref{lem:basisWordShift} that $u_j^\prime \equiv \phi^{z_j} (u_j)$ where $z_j = \Sigma_{i=0}^{j-1} (n_i^\prime - n_i)$.

	Combining these facts, we have that
\begin{equation*}
	\phi^{z_m} (u_m^{-1})
	\equiv (u_m^\prime)^{-1}
	\equiv u_{m+1}^\prime
	\equiv \phi^{z_{m+1}} (u_{m+1})
\end{equation*}
and thus $u_{m+1} \equiv \phi^{z_m - z_{m+1}} (u_m^{-1})$.
Moreover, we have that
\begin{equation*}
	z_m - z_{m+1} = n_m - n_m^\prime = n_m - 0 = n_m
\end{equation*}
and therefore that $u_{m+1} \equiv \phi^{n_m} (u_m^{-1})$.

\end{proof}

	Two simple consequences of this are:

\begin{lemma}	\label{lem:WrsaVmeansWrsaNiceV}

	Let $w \in ( U_Q \cup \lttr)^\ast$, where $U_Q \subseteq \ouh$, and $v \in \freemon[( U_H \cup \ltr )]$ be words in HNN reduced form such that $w \rightsquigarrow_m v$.
Then there is a word $v^\prime \in ( U_Q \cup \lttr )^\ast$ in HNN reduced form such that $w \rightsquigarrow_m v^\prime$.

\end{lemma}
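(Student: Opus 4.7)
My plan is to construct $v'$ directly by performing a minimal, localized sequence of semicommutations at position $m$ of $w$. Writing
\[
	w \equiv t^{n_0} u_1 t^{n_1} \cdots u_m t^{n_m} u_{m+1} t^{n_{m+1}} \cdots u_k t^{n_k}
\]
with each $u_j \in \overline{U_Q}$, Lemma \ref{lem:rsaMeansNiceIndices} gives $u_{m+1} \equiv \phi^{n_m}(u_m^{-1})$. For this iterated image to be defined, if $n_m > 0$ then $u_m, \phi(u_m), \ldots, \phi^{n_m-1}(u_m) \in \oua$, and symmetrically if $n_m < 0$ the corresponding iterates lie in $\oub$.

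Using this I will apply $|n_m|$ successive elementary semicommutations at position $m$ that slide $u_m$ through $t^{n_m}$, yielding an intermediate word
\[
	w^\dagger \equiv \cdots u_{m-1} t^{n_{m-1}+n_m} \phi^{n_m}(u_m) u_{m+1} t^{n_{m+1}} u_{m+2} \cdots
\]
(with $w^\dagger \equiv w$ when $n_m = 0$). Since $\phi^{n_m}(u_m) \equiv u_{m+1}^{-1}$, an elementary cancellation at position $m$ of $w^\dagger$ produces
\[
	v' \equiv \cdots u_{m-1} t^{n_{m-1}+n_m+n_{m+1}} u_{m+2} \cdots,
\]
whose basis letters are all inherited from $w$, so $v' \in (U_Q \cup \lttr)^\ast$ automatically.

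The main obstacle is verifying that both $w^\dagger$ and $v'$ are in HNN reduced form, as required by the definition of $\rsa_m$. Here I will exploit HNN reducedness of $w$ together with Lemma \ref{lem:rsaMeansNiceIndices} to constrain the signs of $n_{m-1}, n_m, n_{m+1}$: if $n_m > 0$ then $u_m \in \oua \subseteq A$ and $u_{m+1} \in \oub \subseteq B$, so forbidding $t^{-1} u_m t$ and $t u_{m+1} t^{-1}$ as subwords of $w$ forces $n_{m-1} \geq 0$ and $n_{m+1} \geq 0$; symmetric conclusions hold when $n_m < 0$; and the case $n_m = 0$ forces $n_{m-1}$ and $n_{m+1}$ to have matching signs (else $t \cdot 1 \cdot t^{-1}$ or $t^{-1} \cdot 1 \cdot t$ would already appear in $w$). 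A short case analysis based on these sign constraints then rules out any forbidden subwords around the trivial group element $\phi^{n_m}(u_m) u_{m+1}$ in $w^\dagger$ and around $u_{m-1}$ and $u_{m+2}$ in $v'$, which will complete the proof that $w \rsa_m v'$.
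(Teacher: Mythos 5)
Your proof is correct and takes essentially the same route as the paper's: both invoke Lemma \ref{lem:rsaMeansNiceIndices} to identify $u_{m+1} \equiv \phi^{n_m}(u_m^{-1})$, perform $|n_m|$ elementary semicommutations to bring the cancelling pair together over a trivial $t$-exponent, cancel, and observe that the resulting word uses only basis letters already present in $w$, hence lies in $(U_Q \cup \lttr)^\ast$. The only differences are cosmetic --- you slide $u_m$ rightward where the paper slides $u_{m+1}$ leftward to become $u_m^{-1}$ --- together with your explicit sign analysis verifying that the intermediate word and $v^\prime$ remain in HNN reduced form, a point the paper's proof leaves implicit.
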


\begin{proof}

	By Lemma \ref{lem:rsaMeansNiceIndices} we know that we may write $w$ in the form
\begin{equation*}
	t^{n_0} u_1 t^{n_1}
	\ldots
	t^{n_{m-1}} u_m t^{n_m} \phi^{n_m}(u_m^{-1}) t^{n_{m+1}}
	\ldots
	u_k t^{n_k}.
\end{equation*}
By a finite sequence of elementary semicommutations we can transform this into
\begin{equation*}
	t^{n_0} u_1 t^{n_1}
	\ldots
	t^{n_{m-1}} u_m u_m^{-1} t^{n_m + n_{m+1}}
	\ldots
	u_k t^{n_k}
\end{equation*}
which after an elementary cancellation gives
\begin{equation*}
	t^{n_0} u_1 t^{n_1}
	\ldots
	t^{n_{m-1} + n_m + n_{m+1}}
	\ldots
	u_k t^{n_k}.
\end{equation*}
This word is written over a subset of the basis words used in $w$ as the $u_j$ where $j \neq m, m+1$ are untouched.
Thus if we designate it $v^\prime$ it satisfies our conditions.

\end{proof}

\begin{lemma}	\label{lem:wDopExists}

	Let $v, v^\prime, w, w^\prime \in \freemon[U_H \cup \ltr]$ be words in HNN reduced form such that $v \leftarrow_m w \sim w^\prime \rightarrow_{m^\prime} v^\prime$.
There is some $w^\dop \in \freemon[U_H \cup \ltr]$ such that $w \sim w^\dop$ and elementary cancellations are possible at both $m$ and $m^\prime$ in $w^\dop$.

\end{lemma}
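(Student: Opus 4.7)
My plan is to produce $w^\dop$ by a finite sequence of elementary semicommutations acting only at the boundary between positions $m^\prime$ and $m^\prime + 1$ of $w$, which installs the required cancellation at $m^\prime$ while leaving the region around position $m$ untouched, so that the cancellation at $m$ already present in $w$ is preserved. The case $m = m^\prime$ is trivial (take $w^\dop \equiv w$), and the case $m > m^\prime$ reduces to $m < m^\prime$ by interchanging the two pairs $(w, v, m)$ and $(w^\prime, v^\prime, m^\prime)$; this interchange is valid because $\sim$ is symmetric, giving the equivalent hypothesis $v^\prime \leftarrow_{m^\prime} w^\prime \sim w \to_m v$. I therefore assume $m < m^\prime$.

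Writing $w \equiv t^{n_0} u_1 t^{n_1} \ldots u_k t^{n_k}$, the hypothesis $w \sim w^\prime \to_{m^\prime} v^\prime$ gives $w \rsa_{m^\prime} v^\prime$, and Lemma~\ref{lem:rsaMeansNiceIndices} then forces $u_{m^\prime + 1} \equiv \phi^{n_{m^\prime}}(u_{m^\prime}^{-1})$. Applying the same finite sequence of elementary semicommutations used in the proof of Lemma~\ref{lem:WrsaVmeansWrsaNiceV}, which transports the block $t^{n_{m^\prime}}$ through $\phi^{n_{m^\prime}}(u_{m^\prime}^{-1})$ one stable letter at a time, yields
\begin{equation*}
w^\dop \equiv t^{n_0} u_1 t^{n_1} \ldots u_{m^\prime} \, t^{0} \, u_{m^\prime}^{-1} \, t^{n_{m^\prime} + n_{m^\prime + 1}} \, u_{m^\prime + 2} \ldots u_k t^{n_k},
\end{equation*}
with $w \sim w^\dop$ by construction.

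It now suffices to verify both cancellations in $w^\dop$. At position $m^\prime$ this is immediate: $n_{m^\prime}^\dop = 0$ and $u_{m^\prime + 1}^\dop \equiv u_{m^\prime}^{-1} \equiv (u_{m^\prime}^\dop)^{-1}$. At position $m$, the key observation is that the semicommutations above alter only the entries at indices $m^\prime$ and $m^\prime + 1$, together with the two flanking exponents $n_{m^\prime}$ and $n_{m^\prime + 1}$, so everything strictly to the left of position $m^\prime$ is untouched; since $m + 1 \leq m^\prime$, the triple $(u_m, n_m, u_{m+1})$ agrees with its value in $w$, where $n_m = 0$ and $u_{m+1} \equiv u_m^{-1}$ hold by the hypothesis $w \to_m v$. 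The main subtlety lies in this locality claim about the semicommutation subroutine of Lemma~\ref{lem:WrsaVmeansWrsaNiceV} — it is clear from inspection of that proof, but it is exactly what forces the two cancellations to coexist in the single word $w^\dop$.
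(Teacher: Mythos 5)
Your proof is correct and takes essentially the same route as the paper: both use Lemma \ref{lem:rsaMeansNiceIndices} to deduce $u_{m^\prime+1} \equiv \phi^{n_{m^\prime}}(u_{m^\prime}^{-1})$ in $w$ itself and then apply semicommutations localised at position $m^\prime$ to install that cancellation while the cancellation at $m$, already present in $w$ by hypothesis, is untouched. Your explicit verification of the locality of the semicommutation subroutine is a detail the paper leaves implicit, but it is not a different argument.
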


\begin{proof}

	We may assume without loss of generality that $m \leq m^\prime$. 
If we write
\begin{equation*}
	w \equiv 
	t^{n_0} u_0 t^{n_1}
	\ldots 
	u_m t^{n_m} u_{m+1}
	\ldots 
	u_{m^\prime} t^{n_{m^\prime}} u_{m^\prime + 1}
	\ldots
	u_k t^{n_k}
\end{equation*}
then we know by Lemma \ref{lem:rsaMeansNiceIndices} that $w \rightsquigarrow_{m^\prime} v^\prime$ implies that $u_{m^\prime+1} \equiv \phi^{n_{m^\prime}}(u_{m^\prime}^{-1})$.
Therefore as we already know that $n_m = 0$ and $u_{m+1} \equiv u_m^{-1}$ by $v \leftarrow_m w$ we can write
\begin{equation*}
	w \equiv 
	t^{n_0} u_0 t^{n_1}
	\ldots 
	u_m  u_m^{-1}
	\ldots
	u_{m^\prime} t^{n_{m^\prime}} \phi^{n_{m^\prime}}(u_{m^\prime}^{-1}) t^{n_{m^\prime + 1}}
	\ldots
	u_k t^{n_k}
\end{equation*}
which is a finite sequence of elementary semicommutations away from
\begin{equation*}
	w^\dop \equiv 
	t^{n_0} u_0 t^{n_1}
	\ldots 
	u_m  u_m^{-1}
	\ldots
	u_{m^\prime} u_{m^\prime}^{-1}  t^{n_{m^\prime} + n_{m^\prime + 1}}
	\ldots
	u_k t^{n_k}
\end{equation*}
which has possible cancellations at both $m$ and $m^\prime$.

\end{proof}

	Using this we can strengthen Lemma \ref{lem:mCancelSemicommutingSquare}.

\begin{lemma}	\label{lem:smallSinkTechnical}

	Suppose we have words $v, v^\prime, w, w^\prime \in \freemon[U_H \cup \ltr]$ in reduced HNN form such that $v \leftarrow_m w \sim w^\prime \rightarrow_{m^\prime} v^\prime$.
Either $\mid m - m^\prime \mid \leq 1$ in which case $v \sim v^\prime$ or $\mid m - m^\prime \mid > 1$ in which case there are some $v^\flat, v^\dop, v^\sharp \in \freemon[U_H \cup \ltr]$ in HNN reduced form such that $v \sim v^\flat \rightarrow_m v^\dop \leftarrow_{m^\prime} v^\sharp \sim v^\prime$.

\end{lemma}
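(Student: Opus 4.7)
The strategy is to use Lemma \ref{lem:wDopExists} to replace the two parallel cancellations in the hypothesis by a single word carrying both cancellable pairs simultaneously, and then to analyse how the two cancellation sites interact. Applying Lemma \ref{lem:wDopExists} gives a word $w^{\prime\prime} \sim w$ in HNN reduced form admitting cancellations at both positions $m$ and $m^\prime$. Writing
\begin{equation*}
w^{\prime\prime} \equiv t^{n_0} u_1 t^{n_1} \ldots u_k t^{n_k},
\end{equation*}
this forces $u_{m+1} \equiv u_m^{-1}$, $n_m = 0$, $u_{m^\prime+1} \equiv u_{m^\prime}^{-1}$ and $n_{m^\prime} = 0$. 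Without loss of generality I assume $m \leq m^\prime$ and split into three cases.

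If $m = m^\prime$, a single cancellation of $w^{\prime\prime}$ at this shared position produces a word $z$, and applying Lemma \ref{lem:mCancelSemicommutingSquare} to each half of the diagram $v \leftarrow_m w \sim w^{\prime\prime} \sim w^\prime \rightarrow_m v^\prime$ (with $z$ as midpoint) yields $v \sim z \sim v^\prime$. If $m^\prime = m + 1$, the simultaneous cancellability conditions force $u_{m+2} \equiv u_{m+1}^{-1} \equiv u_m$; combined with $n_m = n_{m+1} = 0$, a direct substitution shows that cancelling $w^{\prime\prime}$ at position $m$ and cancelling at position $m^\prime$ produce literally the same word $z$. Two applications of Lemma \ref{lem:mCancelSemicommutingSquare} then give $v \sim z \sim v^\prime$, placing us in the first alternative of the statement.

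If $m^\prime \geq m + 2$ the two cancellable pairs occupy disjoint segments of $w^{\prime\prime}$, so the cancellations commute. I would let $v^\flat$ and $v^\sharp$ be the two words obtained from $w^{\prime\prime}$ by performing one of the two cancellations and let $v^{\prime\prime}$ be the result of performing both (in either order). Two applications of Lemma \ref{lem:mCancelSemicommutingSquare}, one to each side of the diagram, identify $v$ and $v^\prime$ with the appropriate choices among $v^\flat$ and $v^\sharp$, while the remaining cancellations produce $v^{\prime\prime}$ from each; this assembles the required diamond.

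The main obstacle I anticipate is the careful bookkeeping in the $m^\prime = m + 1$ case, where one must unpack the three consecutive positions $m, m+1, m+2$ of $w^{\prime\prime}$ and verify that the two candidate cancellations really produce identical words, not merely $\sim$-equivalent ones. Beyond this, everything reduces either to a direct substitution or to a routine application of Lemma \ref{lem:mCancelSemicommutingSquare}; the specific indices labelling the arrows in the diamond follow once the explicit definitions of $v^\flat$ and $v^\sharp$ are written down.
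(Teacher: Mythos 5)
Your proposal is correct and follows essentially the same route as the paper: invoke Lemma \ref{lem:wDopExists} to obtain a single word admitting both cancellations, then split on $m = m^\prime$, $m^\prime = m+1$ (where the forced identities $u_{m+2} \equiv u_{m+1}^{-1} \equiv u_m$ and $n_m = n_{m+1} = 0$ make the two cancellations literally coincide), and $m^\prime \geq m+2$ (where they commute), finishing each case with Lemma \ref{lem:mCancelSemicommutingSquare}. The only cosmetic difference is that in the $m = m^\prime$ case the paper applies Lemma \ref{lem:mCancelSemicommutingSquare} directly to the hypothesis without detouring through $w^{\prime\prime}$.
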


\begin{proof}

	Suppose that $\mid m - m^\prime \mid = 0$.
Then $m = m^\prime$ and this case has already been taken care of by Lemma \ref{lem:mCancelSemicommutingSquare}.

\begin{figure}
\[\begin{tikzcd}
	{w_0} && {w_{-z}} && {w_\ell} \\
	\\
	v && {v^{\prime \prime}} && {v^\prime}
	\arrow[squiggly, no head, from=1-1, to=1-3]
	\arrow[curve={height=-12pt}, squiggly, no head, from=1-1, to=1-5]
	\arrow["m"', from=1-1, to=3-1]
	\arrow["{m^\prime}", curve={height=-12pt}, from=1-3, to=3-3]
	\arrow["m"', curve={height=12pt}, from=1-3, to=3-3]
	\arrow["{m^\prime}", from=1-5, to=3-5]
	\arrow[squiggly, no head, from=3-1, to=3-3]
	\arrow[squiggly, no head, from=3-3, to=3-5]
\end{tikzcd}\]
\caption{	\label{fig:mDifOne}
The squiggly lines represent a sequence of elementary semicommutations in the same way $\sim$ does in the body of the text.
}
\end{figure}

	Suppose that $\mid m - m^\prime \mid = 1$.
By Lemma \ref{lem:wDopExists} we know that there exists some $w^\dop \in \freemon[U_H \cup \ltr]$ such that cancellations at both $m$ and $m^\prime$ are possible and $w \sim w^\dop$.
We assume without loss of generality that $m^\prime = m + 1$.
Thus we know that if we write $w^\dop \equiv t^{n_0} u_1 t^{n_1} \ldots u_m t^{n_m} u_{m+1} t^{n_{m+1}} u_{m+2} \ldots u_k t^{n_k}$ then $u_m \equiv u_{m+1}^{-1} \equiv u_{m+2}$ and $n_{m} = 0 = n_{m+1}$.
Therefore the cancellations at $m$ and $m^\prime$ both produce the same result which we call $v^\dop$.
This means that $v \leftarrow_m w \sim w^\dop \rightarrow_m v^\dop$ and so by Lemma \ref{lem:mCancelSemicommutingSquare} $v \sim v^\dop$.
Similarly we can argue that $v^\dop \sim v^\prime$.
As $\sim$ is transitive we have that $v \sim v^\prime$.
See Figure \ref{fig:mDifOne} for an illustration of this.

\begin{figure}
\[\begin{tikzcd}
	{w_0} &&& {w_{-z}} &&& {w_\ell} \\
	\\
	v && {v^\flat} && {v^\sharp} && {v^\prime} \\
	\\
	&&& {v^{\prime \prime}}
	\arrow[squiggly, no head, from=1-1, to=1-4]
	\arrow["m"', from=1-1, to=3-1]
	\arrow["m"', from=1-4, to=3-3]
	\arrow["{m^\prime}", from=1-4, to=3-5]
	\arrow[curve={height=12pt}, squiggly, no head, from=1-7, to=1-1]
	\arrow["{m^\prime}", from=1-7, to=3-7]
	\arrow[squiggly, no head, from=3-1, to=3-3]
	\arrow["{m^\prime}"', from=3-3, to=5-4]
	\arrow["m", from=3-5, to=5-4]
	\arrow[squiggly, no head, from=3-7, to=3-5]
\end{tikzcd}\]
\caption{	\label{fig:mMoreThanOneDif}
The squiggly lines represent a sequence of elementary semicommutations in the same way $\sim$ does in the body of the text.
}
\end{figure}

	Suppose that $\mid m - m^\prime \mid \geq 1$.
By Lemma \ref{lem:wDopExists} we know that there exists some $w^\dop \in \freemon[U_H \cup \ltr]$ such that $w \sim w^\dop$ and cancellations at both $m$ and $m^\prime$ are possible.
As $\mid m - m^\prime \mid \geq 1$ we know that the two possible cancellations do not overlap and thus that unlike the previous cases it is possible to perform them sequentially (and that either order will produce the same product).
Let $v^\flat, v^\dop, v^\sharp \in \freemon[U_H \cup \ltr]$ be such that $w^\dop \rightarrow_m v^\flat \rightarrow_{m^\prime} v^\dop$ and $w^\dop \rightarrow_{m^\prime} v^\sharp \rightarrow_m v^\dop$.
This means that $v \leftarrow_m w \sim w^\dop \rightarrow_m v^\flat$ and so by Lemma \ref{lem:mCancelSemicommutingSquare} we know that $v \sim v^\flat$.
Similarly we can find that $v^\sharp \sim v^\prime$.
Therefore $v \sim v^\flat \rightarrow_m v^\dop \leftarrow_{m^\prime} v^\sharp \sim v^\prime$ as required.
See Figure \ref{fig:mMoreThanOneDif} for an illustration of this.

\end{proof}

	We use $w \rightsquigarrow v$ to indicate that $w \rightsquigarrow_m v$ for some $m$.

\begin{lemma}	\label{lem:smallSinkExists}

	Suppose we have words $w, v, v^\prime \in \freemon[U_H \cup \ltr]$ in HNN reduced form such that $v \leftsquigarrow w \rightsquigarrow v^\prime$.
Either $v \sim v^\prime$ or there exists $v^\dop \in \freemon[U_H \cup \ltr]$ in HNN reduced form such that $v \rightsquigarrow v^\dop \leftsquigarrow v^\prime$.

\end{lemma}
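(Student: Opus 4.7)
The plan is to reduce this statement directly to Lemma \ref{lem:smallSinkTechnical} by unpacking the definition of $\rightsquigarrow$. By hypothesis we have $v \leftsquigarrow_m w \rightsquigarrow_{m^\prime} v^\prime$ for some indices $m, m^\prime$, so by the definition of $\rightsquigarrow_m$ and $\rightsquigarrow_{m^\prime}$ there exist HNN reduced form words $w_1, v_1, w_2, v_2 \in \freemon[U_H \cup \ltr]$ satisfying $w \sim w_1 \rightarrow_m v_1 \sim v$ and $w \sim w_2 \rightarrow_{m^\prime} v_2 \sim v^\prime$. Since $\sim$ is transitive (composition of finite sequences of elementary semicommutations), we obtain $w_1 \sim w_2$, and hence $v_1 \leftarrow_m w_1 \sim w_2 \rightarrow_{m^\prime} v_2$, which is exactly the hypothesis of Lemma \ref{lem:smallSinkTechnical}.

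I then split into the two cases provided by that lemma. In the first case, $|m - m^\prime| \leq 1$ and the lemma gives $v_1 \sim v_2$; combining this with $v \sim v_1$ and $v^\prime \sim v_2$ via transitivity of $\sim$ yields $v \sim v^\prime$, which is the first alternative in the conclusion. In the second case, $|m - m^\prime| > 1$ and the lemma produces HNN reduced form words $v^\flat, v^\dop, v^\sharp$ with $v \sim v_1 \sim v^\flat \rightarrow_m v^\dop \leftarrow_{m^\prime} v^\sharp \sim v_2 \sim v^\prime$. The chain $v \sim v^\flat \rightarrow_m v^\dop$ witnesses $v \rightsquigarrow_m v^\dop$, and the chain $v^\prime \sim v^\sharp \rightarrow_{m^\prime} v^\dop$ witnesses $v^\prime \rightsquigarrow_{m^\prime} v^\dop$. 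Hence $v \rightsquigarrow v^\dop \leftsquigarrow v^\prime$, which is the second alternative.

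The main obstacle is essentially nil, as the real technical work --- tracking how basis words and exponents are transformed under sequences of elementary semicommutations, and in particular controlling what happens when two cancellations can be performed --- has already been carried out in Lemmas \ref{lem:basisWordShift} through \ref{lem:smallSinkTechnical}. The only point requiring care is confirming that every intermediate word produced is in HNN reduced form, which is guaranteed by the construction in Lemma \ref{lem:smallSinkTechnical} and by the fact that elementary semicommutations preserve HNN reduced form. This lemma is essentially a ``closure'' statement that promotes the one-step confluence result from Lemma \ref{lem:smallSinkTechnical} to its closure under $\sim$, and the argument is just chasing definitions and invoking transitivity.
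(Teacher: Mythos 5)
Your proposal is correct and follows essentially the same route as the paper: unpack the definition of $\rightsquigarrow$ to obtain $v \sim v_1 \leftarrow_m w_1 \sim w_2 \rightarrow_{m^\prime} v_2 \sim v^\prime$, apply Lemma \ref{lem:smallSinkTechnical} to the middle portion, and recombine using transitivity of $\sim$ and the definition of $\rightsquigarrow$. The paper states this more tersely, but the content is identical.
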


\begin{proof}

	The hypotheses in the statement of the lemma imply that there are HNN reduced form words $v_1, v_2, w_1, w_2 \in \freemon[U_H \cup \ltr]$ and numbers $m, m^\prime \in \mathbb{N}$ such that $v \sim v_1$, $v_2 \sim v^\prime$, $w_1 \sim w \sim w_2$ and $v_1 \leftarrow_m w_1 \sim w_2 \rightarrow_{m^\prime} v_2$.
The rest follows immediately from Lemma \ref{lem:smallSinkTechnical} and the definition of $\rightsquigarrow$.

\end{proof}

	We can further show that any two words equal in $H^\ast$ possess a, not necessarily unique, ``sink'' word that can be reached by a finite sequence of elmentary cancellations and semicommutations from either starting word.

\begin{lemma}	\label{lem:largeSinkTechnical}

	Let $v_0, v_1, \ldots, v_k \in \freemon[(U_H \cup \ltr)]$ be a set of words in HNN reduced form such that $v_i \leftsquigarrow v_{i+1}$ or $v_i \rightsquigarrow v_{i+1}$, for $0 \leq i < k$.

There exists a sequence of words $v_0^\prime, v_1^\prime, \ldots, v_{k^\prime}^\prime \in \freemon[(U_H \cup \ltr)]$ in HNN reduced form such that
$v_0^\prime \equiv v_0$,
$v_{k^\prime}^\prime \equiv v_k$,
$k^\prime \leq k$,
$v_i^\prime \rightsquigarrow v_{i+1}^\prime$ for $0 \leq i < m$ 
and
$v_i^\prime \leftsquigarrow v_{i+1}^\prime$ for $m \leq i \leq k^\prime$, for some $0 \leq m \leq k^\prime$.

\end{lemma}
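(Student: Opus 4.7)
The plan is a confluence-style argument: iteratively flatten every \emph{peak} in the sequence—that is, every index $i$ with $v_{i-1} \leftsquigarrow v_i \rightsquigarrow v_{i+1}$—by repeated application of Lemma \ref{lem:smallSinkExists}. Observe that a sequence containing no peaks automatically has the required valley shape, since reading the arrows left-to-right we then see a (possibly empty) block of $\rightsquigarrow$'s followed by a block of $\leftsquigarrow$'s, with the transition occurring at the desired index $m$. Thus the base case is simply $v_i' \equiv v_i$, and all the work lies in reducing any sequence to a peak-free one. I will induct on the total size $S = \sum_{i=0}^{k} |v_i|$, where $|v_i|$ denotes the word length of $v_i$ in $\freemon[U_H \cup \ltr]$.

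Fix a peak at index $i$ and apply Lemma \ref{lem:smallSinkExists} to $v_{i-1} \leftsquigarrow v_i \rightsquigarrow v_{i+1}$. In the first subcase one obtains an HNN reduced word $v''$ with $v_{i-1} \rightsquigarrow v'' \leftsquigarrow v_{i+1}$; replace $v_i$ by $v''$ in the sequence. Since each $\rightsquigarrow$ performs one elementary cancellation and therefore strictly removes two letters, we get $|v''| = |v_{i-1}| - 2 = |v_i| - 4$, so $S$ strictly shrinks. In the second subcase we obtain $v_{i-1} \sim v_{i+1}$, and we shorten the sequence by deleting the peak: whenever $i \geq 2$ we delete $v_{i-1}$ and $v_i$, and whenever $i \leq k-2$ we delete $v_i$ and $v_{i+1}$. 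The defining chain $w \sim w' \rightarrow_m v' \sim v$ for $\rightsquigarrow$ (and the symmetric one for $\leftsquigarrow$) tolerates $\sim$-padding at its outer end, so the neighbouring arrow $v_{i-2} \rightsquigarrow v_{i-1}$ or $v_{i+1} \rightsquigarrow v_{i+2}$ (or the $\leftsquigarrow$ analogue) immediately extends to an arrow of the same type between the two adjacent survivors. In both situations $S$ strictly decreases, and the inductive hypothesis produces the desired valley.

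The main obstacle I anticipate is the sole boundary case that the previous paragraph does not cover: $k = 2$, so the whole sequence $v_0 \leftsquigarrow v_1 \rightsquigarrow v_2$ is itself the peak, and Lemma \ref{lem:smallSinkExists} returns only $v_0 \sim v_2$. Here no neighbour is available to absorb the semicommutation equivalence and one must argue directly—either by reading the literal identities $v_0' \equiv v_0$ and $v_{k'}' \equiv v_k$ up to semicommutation equivalence (harmless because elementary operations represent equalities in $H^\ast$), or by exhibiting a common further reduction $v''$ of both $v_0$ and $v_2$ that realises a length-two valley $v_0 \rightsquigarrow v'' \leftsquigarrow v_2$. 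Apart from this corner case, the proof is routine combinatorial bookkeeping of how peaks shift when we substitute in the middle of the sequence, with the strict decrease of $S$ at every step guaranteeing termination.
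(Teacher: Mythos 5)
Your proof is correct in substance but takes a genuinely different route from the paper's. The paper inducts on the number of arrows $k$: it applies the inductive hypothesis to the first $K$ arrows to obtain a valley, invokes Lemma \ref{lem:smallSinkExists} only at the final junction $v_{K'-1}^\prime \leftsquigarrow v_{K'}^\prime \rightsquigarrow v_{K+1}$, and then re-applies the inductive hypothesis to the resulting prefix. You instead run a Newman-style confluence argument, flattening an arbitrary peak and terminating via the strictly decreasing total length $S = \sum_i |v_i|$; this measure is precisely what absorbs the fact that flattening one peak can create a new peak at an adjacent index, which the paper instead handles by its nested re-use of the inductive hypothesis. Both arguments rest on the same two ingredients: Lemma \ref{lem:smallSinkExists} as the local confluence step, and the observation that a $\rightsquigarrow$ or $\leftsquigarrow$ arrow tolerates $\sim$-padding at either end. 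One small correction to your bookkeeping: $|v''| \leq |v_{i-1}| - 2 \leq |v_i| - 4$ rather than equality, since an elementary cancellation can also merge adjacent $t$-blocks; the inequality is all the induction needs. The $k=2$ corner case you flag is real, and it is present in the paper's own proof as well: if $v_0 \leftsquigarrow v_1 \rightsquigarrow v_2$ collapses to $v_0 \sim v_2$ with $v_0 \not\equiv v_2$ and no cancellation available in $v_0$, then no chain of $\rightsquigarrow$/$\leftsquigarrow$ arrows with the literal endpoints $v_0^\prime \equiv v_0$ and $v_{k^\prime}^\prime \equiv v_k$ can exist, since each such arrow requires an actual cancellation. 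The paper silently takes $v^\dop \equiv v_0$ and accepts $v^\dop \sim v_2$, which is exactly the weaker conclusion $v_0 \succsim v^\dop \precsim v_k$ in which Lemma \ref{lem:largeSinkExists} actually consumes this result. Your first proposed repair (reading the terminal identity up to $\sim$) is therefore the right one; your second (exhibiting a common further reduction of $v_0$ and $v_2$) fails when both words are already in most reduced form, so it should be dropped.
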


\begin{proof}

	If $k = 0$ then the statement is trivial.

	If $k = 1$ then either $v \equiv v_0 \leftsquigarrow v_1 \equiv v^\prime$ or $v \equiv v_0 \rightsquigarrow v_1 \equiv v^\prime$ in which case choosing $v^\dop \equiv v$ or $v^\dop \equiv v^\prime$ respectively would satisfy the necessary conditions.

	If $k = 2$, then either one of $v_0$, $v_1$ and $v_2$ make a suitable choice for $v^\dop$ or we have that $v_0 \leftsquigarrow v_1 \rightsquigarrow v_2$.
In this case by Lemma \ref{lem:smallSinkExists} either there is some suitable $v^\dop$ or $v \sim v^\prime$ in which case we can satisfy the necessary conditions by taking $v^\dop \equiv v$.

	Assume that $k = K + 1 \geq 3$ and that for all sequences with $k \leq K$ arrows the Lemma's statement is satisfied.
This means that we can find a sequence of words $v_0^\prime, \ldots, v_{K^\prime}^\prime \in \freemon[U_H \cup \ltr]$, for $K^\prime \leq K$, such that $v_0 \equiv v_0^\prime$, $v_K \equiv v_{K^\prime}^\prime$ and that there exists some $0 \leq m \leq K^\prime$ such that $v_j \rightsquigarrow v_{j+1}$ if $0 \leq j < m$ and $v_j \leftsquigarrow v_{j+1}$ if $m \leq j \leq K^\prime$.
If $K^\prime < K$ then there is a sequence of $K^\prime + 1 \leq K$ arrows between $v_0$ and $v_{K+1}$ and the Lemma's statement is satisfied by inductive assumption.
So going forward we assume $K^\prime = K$.

	Suppose $v_K \leftsquigarrow v_{K+1}$ then we can say that
\begin{equation*}
	v_0 \rightsquigarrow v_1^\prime \rightsquigarrow v_2^\prime \rightsquigarrow
	\ldots
	\rightsquigarrow v_m^\prime \leftsquigarrow
	\ldots
	\leftsquigarrow v_{K -1}^\prime \leftsquigarrow v_K \leftsquigarrow v_{K+1}
\end{equation*}
and have a sequence of arrows satisfying the necessary conditions.
Otherwise $v_K \rightsquigarrow v_{K+1}$ and we have
\begin{equation*}
	v_0 \rightsquigarrow v_1^\prime \rightsquigarrow v_2^\prime \rightsquigarrow
	\ldots
	\rightsquigarrow v_m^\prime \leftsquigarrow
	\ldots
	\leftsquigarrow v_{K^ -1}^\prime \leftsquigarrow v_K \rightsquigarrow v_{K+1}
\end{equation*}
In this case we can apply Lemma \ref{lem:smallSinkExists} to 
$v_{K^\prime -1}^\prime \leftsquigarrow v_{K^\prime}^\prime \rightsquigarrow v_{K+1}$.
This means either that $v_{K^\prime -1}^\prime \sim v_{K+1}$ or that there is some word $v_K^\dop \in \freemon[(U_H \cup \ltr)]$ in HNN reduced form such that $v_{K-1}^\prime \rsa v_K^\dop \lsa v_{K+1}$.
If the former then we have a sequence of $K^\prime - 1 + 1 = K^\prime \leq K$ arrows between $v_0$ and $v_{K+1}$ (and as above we can satisfy the necessary conditions by inductive assumption).
If the latter then we have that
\begin{equation*}
	v_0 \rsa v_1^\prime \rsa v_2^\prime \rightsquigarrow
	\ldots
	\rightsquigarrow v_j^\prime \leftsquigarrow
	\ldots
	\leftsquigarrow v_{K-1}^\prime \rsa v_K^\dop \lsa v_{K+1}.
\end{equation*}
We can apply the inductive assumption to the subsequence of arrows going from $v_0$ to $v_K^\dop$ in a similar manner to above and get a sequence of the form
\begin{equation*}
	v_0 \rightsquigarrow v_1^\dop \rightsquigarrow v_2^\dop \rightsquigarrow
	\ldots
	\rightsquigarrow v_{m^\prime}^\dop \leftsquigarrow
	\ldots
	\leftsquigarrow v_{K^\dop-1}^\dop \leftsquigarrow v_{K^\dop}^\dop \lsa v_{K+1},
\end{equation*}
for some $m^\prime$, which satisfies the necessary conditions.
For a compiled diagram illustrating the proof see Figure \ref{fig:sinkInduction}.

\begin{figure}
\centering
\adjustbox{scale = 0.95}{
\begin{tikzcd}
	& {v_1} & \ldots & \ldots & \ldots & {v_{K-1}} \\
	{v_0} &&&&&& {v_K} \\
	& {v_1^\prime} & \ldots & {v_m^\prime} & \ldots & {v_{K-1}^\prime} && {v_{K+1}} \\
	&&&&&& {v_{K}^{\prime \prime}} \\
	& {v_1^{\prime \prime}} & \ldots & {v_{m^\prime}^{\prime \prime}} & \ldots & {v_{K-1}^{\prime \prime}}
	\arrow[squiggly, tail reversed, from=1-2, to=1-3]
	\arrow[squiggly, from=1-2, to=2-1]
	\arrow[squiggly, tail reversed, from=1-3, to=1-4]
	\arrow[squiggly, tail reversed, from=1-5, to=1-4]
	\arrow[squiggly, tail reversed, from=1-5, to=1-6]
	\arrow[squiggly, tail reversed, from=1-6, to=2-7]
	\arrow[squiggly, from=2-1, to=3-2]
	\arrow[squiggly, from=2-1, to=5-2]
	\arrow[squiggly, from=2-7, to=3-6]
	\arrow[squiggly, from=2-7, to=3-8]
	\arrow[squiggly, from=3-2, to=3-3]
	\arrow[squiggly, from=3-3, to=3-4]
	\arrow[squiggly, from=3-5, to=3-4]
	\arrow[squiggly, from=3-6, to=3-5]
	\arrow[squiggly, from=3-6, to=4-7]
	\arrow[squiggly, from=3-8, to=4-7]
	\arrow[squiggly, from=4-7, to=5-6]
	\arrow[squiggly, from=5-2, to=5-3]
	\arrow[squiggly, from=5-3, to=5-4]
	\arrow[squiggly, from=5-5, to=5-4]
	\arrow[squiggly, from=5-6, to=5-5]
\end{tikzcd}
}
\caption{	\label{fig:sinkInduction}
	Double ended squiggly arrows indicate an indifference to direction.
This demonstrates a case where the most extensive method is require.
}
\end{figure}
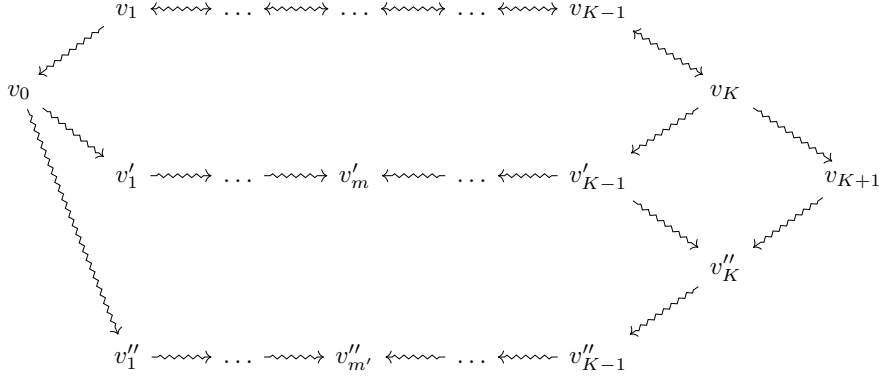

\end{proof}

	We use $w \succsim v$ to indicate that either $w \sim v$ or there is a sequence of words $w_0, w_1, \ldots, w_k \in \freemon[(U_H \cup \ltr)]$ in HNN reduced form such that $w \equiv w_0$, $v \equiv w_k$, $w_i \rightsquigarrow w_{i+1}$ and $k \geq 1$.

\begin{lemma}	\label{lem:largeSinkExists}

	Let $v, v^\prime \in \freemon[U_H \cup \ltr]$ be words which are in HNN reduced form in $H^\ast$.
If $v = v^\prime$ in $H^\ast$ then there exists some word $v^\dop \in \freemon[U_H \cup \ltr]$ such that $v \succsim v^\dop \precsim v^\prime$.

\end{lemma}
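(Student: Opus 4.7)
The plan is to combine Lemma~\ref{lem:equalMeansSeqOfElOps} with Lemma~\ref{lem:largeSinkTechnical}. Since $v = v^\prime$ in $H^\ast$, Lemma~\ref{lem:equalMeansSeqOfElOps} furnishes a finite sequence of elementary operations from $v$ to $v^\prime$. From this I will construct a chain of HNN reduced form words $v \equiv v_0, v_1, \ldots, v_k \equiv v^\prime$ in which each consecutive pair is related by $\rightsquigarrow$ or $\leftsquigarrow$. Applied to this chain, Lemma~\ref{lem:largeSinkTechnical} produces a V-shape with apex $v^{\prime\prime}$ reached from $v$ by a chain of $\rightsquigarrow$ steps and from $v^\prime$ by a chain of $\leftsquigarrow$ steps, which is exactly $v \succsim v^{\prime\prime} \precsim v^\prime$ (with the degenerate case of a trivial chain handled directly by the $\sim$ clause in the definition of $\succsim$).

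The core difficulty is constructing the $\rightsquigarrow/\leftsquigarrow$ chain, since elementary additions introduce a pair $u u^{-1}$ that breaks HNN reduced form, and semicommutations can momentarily create forbidden patterns such as $t u_b t^{-1}$ or $t^{-1} u_a t$. My strategy is induction on the number $N$ of elementary additions appearing in the operation sequence. When $N = 0$ the sequence contains only semicommutations and cancellations, and can be decomposed into blocks of semicommutations punctuated by cancellations; each cancellation together with its surrounding semicommutations then realises a single $\rightsquigarrow$-step between HNN reduced form words, giving the required chain. When $N > 0$, I pick the earliest addition, inserting some pair $u u^{-1}$. Because the endpoint $v^\prime$ is HNN reduced, the descendants of $u$ and $u^{-1}$ must eventually cancel later in the sequence; Lemma~\ref{lem:basisWordShift} lets me track how these descendants migrate under subsequent semicommutations, so I can locate the paired cancellation step. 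Excising the addition and its paired cancellation, and reorganising the intervening semicommutations in the spirit of Lemma~\ref{lem:mCancelSemicommutingSquare}, yields a new elementary operation sequence from $v$ to $v^\prime$ with one fewer addition, closing the induction.

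The main obstacle I expect is the precise formalisation of this excision: showing that the semicommutations lying between an inserted pair and its eventual cancellation can be rearranged so that the pair becomes adjacent to its cancellation and annihilates cleanly, leaving a valid elementary operation sequence. Lemma~\ref{lem:basisWordShift} supplies the bookkeeping needed to identify exactly which letter each $u$ has become at each stage, and the diagram-chasing pattern used in Lemmas~\ref{lem:mCancelSemicommutingSquare}, \ref{lem:smallSinkTechnical} and \ref{lem:largeSinkTechnical} provides the template for the reorganisation argument.
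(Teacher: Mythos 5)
Your top-level plan is exactly the paper's proof: Lemma \ref{lem:equalMeansSeqOfElOps} supplies a finite sequence of elementary operations from $v$ to $v^\prime$, this is cut into a zigzag $v \equiv v_0, v_1, \ldots, v_k \equiv v^\prime$ with consecutive terms related by $\rightsquigarrow$ or $\leftsquigarrow$, and Lemma \ref{lem:largeSinkTechnical} straightens the zigzag into the required V-shape. The paper stops there, because the ``core difficulty'' you identify does not exist: all three elementary operations preserve HNN reduced form. An insertion or deletion of $uu^{-1}$ inside a block $w_i$ of $w_0 t^{\varepsilon_1} w_1 \cdots t^{\varepsilon_n}w_n$ does not change the element of $H$ that $w_i$ represents, so it cannot create a pinch $t^{-1}w_i t$ with $w_i \in A$ or $t w_i t^{-1}$ with $w_i \in B$; and a semicommutation alters the two blocks adjacent to the $t$ it acts across only by an element of $\oua$ (resp. $\oub$), which does not change whether those blocks lie in $A$ (resp. $B$). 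Hence every intermediate word in the operation sequence is already in HNN reduced form, and the zigzag is obtained simply by marking the additions and cancellations: a maximal run of semicommutations followed by a cancellation is literally an instance of $\rightsquigarrow$ by its definition ($w \sim w^\prime \rightarrow_m v^\prime \sim v$), and a run followed by an addition is the same thing read backwards, i.e.\ a $\leftsquigarrow$ step.

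Because of this, your induction on the number $N$ of additions, and in particular the excision step, is unnecessary --- and as stated it has a genuine gap. The claim that the descendants of an inserted pair $uu^{-1}$ ``must eventually cancel later in the sequence'' (with each other) is false in general: an inserted letter may cancel against a pre-existing letter of the word rather than against its inserted partner (insert $uu^{-1}$ next to an existing $u^{-1}$ and cancel the other way), so there need be no single ``paired cancellation step'' to excise, and the proposed bookkeeping via Lemma \ref{lem:basisWordShift} would not locate one. Eliminating the $\leftsquigarrow$ steps from the chain is essentially the content of Lemma \ref{lem:largeSinkTechnical} itself, which is precisely why the paper feeds the raw zigzag, additions included, into that lemma rather than pre-processing the operation sequence.
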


\begin{proof}

	By Lemma \ref{lem:equalMeansSeqOfElOps} $v = v^\prime$ in $H^\ast$ implies that there is a finite sequence of elementary operations between them.
This sequence can only contain a finite number of elementary additions and elementary cancellations, thus there is a sequence of words $v_0, v_1, \ldots, v_k \in \freemon[(U_H \cup \ltr)]$ such $v_i \leftsquigarrow v_{i+1}$ or $v_i \rightsquigarrow v_{i+1}$.
The rest follows from Lemma \ref{lem:largeSinkTechnical}.

\end{proof}

	We may also strengthen Lemma \ref{lem:WrsaVmeansWrsaNiceV}.

\begin{lemma}	\label{lem:wordOverBasisSubsetExists}

	Let $v_1, v_k \in \freemon[U_H \cup \ltr]$ be words which are in HNN reduced form in $H^\ast$.
Further let $V_H \subseteq \ouh$ be such that $v_1 \in ( V_H \cup \lttr )^\ast$.
If $v_1 \succsim v_k$ then there exists $v_k^\prime \in ( V_H \cup \lttr )^\ast$ such that $v_k \sim v_k^\prime$.

\end{lemma}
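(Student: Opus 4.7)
The plan is to proceed by induction on the number $n$ of $\rsa$-arrows appearing in a witnessing $\succsim$-chain from $v_1$ to $v_k$, with the key technical tool being Lemma \ref{lem:WrsaVmeansWrsaNiceV}.

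If $n = 0$, then by definition of $\succsim$ we have $v_1 \sim v_k$, and we may simply take $v_k^\prime \equiv v_1$. By hypothesis this already lies in $(V_H \cup \lttr)^\ast$ and is in HNN reduced form, and $v_k \sim v_k^\prime$ is immediate.

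For the inductive step, write the chain as $v_1 \succsim v_{k-1} \rsa v_k$, where the first portion uses fewer $\rsa$-arrows. By the inductive hypothesis there exists an HNN reduced $v_{k-1}^\prime \in (V_H \cup \lttr)^\ast$ with $v_{k-1} \sim v_{k-1}^\prime$. Unpacking $v_{k-1} \rsa v_k$ as $v_{k-1} \rsa_m v_k$ for some $m$, by definition there are HNN reduced words $w, v$ with $v_{k-1} \sim w \rightarrow_m v \sim v_k$. Transitivity of $\sim$ gives $v_{k-1}^\prime \sim w$, so $v_{k-1}^\prime \rsa_m v_k$. Now apply Lemma \ref{lem:WrsaVmeansWrsaNiceV} with $U_Q = V_H$ to obtain an HNN reduced word $v_k^\prime \in (V_H \cup \lttr)^\ast$ such that $v_{k-1}^\prime \rsa_m v_k^\prime$.

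It remains to see that $v_k \sim v_k^\prime$. Unpacking both $\rsa_m$ relations yields $v_{k-1}^\prime \sim w_1 \rightarrow_m \tilde{v}_1 \sim v_k$ and $v_{k-1}^\prime \sim w_2 \rightarrow_m \tilde{v}_2 \sim v_k^\prime$; since $w_1 \sim w_2$, Lemma \ref{lem:mCancelSemicommutingSquare} gives $\tilde{v}_1 \sim \tilde{v}_2$, and hence $v_k \sim \tilde{v}_1 \sim \tilde{v}_2 \sim v_k^\prime$. The principal work has already been done in Lemma \ref{lem:WrsaVmeansWrsaNiceV}; the main subtlety in the argument above is that this lemma does not literally return the given $v_k$ but only some $v_k^\prime$ with the same $\rsa_m$-source, so one must invoke the ``confluence up to $\sim$'' provided by Lemma \ref{lem:mCancelSemicommutingSquare} to close the gap.
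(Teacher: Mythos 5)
Your proposal is correct and follows essentially the same route as the paper: an induction along the chain of $\rightsquigarrow$ arrows, applying Lemma \ref{lem:WrsaVmeansWrsaNiceV} at each step to stay inside $(V_H \cup \lttr)^\ast$ and then using the confluence-up-to-$\sim$ of cancellation at a fixed position to conclude the new word is $\sim$-equivalent to the old target. The only cosmetic differences are that you peel arrows off the end of the chain rather than the front and cite Lemma \ref{lem:mCancelSemicommutingSquare} directly where the paper invokes the $|m-m^\prime|\leq 1$ case of Lemma \ref{lem:smallSinkTechnical}.
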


\begin{proof}

	As $v_1 \succsim v_k$ there is a sequence of HNN reduced form words $v_i \in \freemon[U_H \cup \ltr]$, for $1 \leq i \leq k$, such that $v_i \rightsquigarrow_{m_i} v_{i+1}$, for $1 \leq i < k$.

	Let $i = 1$.
As $v_1^\prime \equiv v_1$, then trivially we have $v_1 \sim v_1^\prime$ and $v_1^\prime \in ( V_H \cup \lttr )^\ast$.

	Let $i = K < k$ and suppose we have $v_K^\prime \in ( V_H \cup \lttr )^\ast$ such that $v_K^\prime \sim v_K$.
This means that $v_K^\prime \rightsquigarrow_{m_K} v_{K+1}$.
So by Lemma \ref{lem:WrsaVmeansWrsaNiceV}, we know that there exists $v_{i+1}^\prime \in ( V_H \cup \lttr )^\ast$ such that $v_K^\prime \rightsquigarrow_{m_K} v_{K+1}^\prime$.
Thus we have $v_{K+1} \leftsquigarrow_{m_K} v_K^\prime \rightsquigarrow_{m_K} v_{K+1}^\prime$ which, by Lemma \ref{lem:smallSinkTechnical}, means that $v_{K+1}^\prime \sim v_{K+1}$.

	Therefore by induction we can find $v_k^\prime \in ( V_H \cup \lttr )^\ast$ such that $v_k^\prime \sim v_k$.

\end{proof}

	We shall say that a word $v \in \freemon[U_H \cup \ltr]$ is in its \textit{most reduced form} if it is in HNN reduced form and there is no $v^\prime \in \freemon[U_H \cup \ltr]$ such that $v \rightsquigarrow v^\prime$.
We can then define
\begin{equation*}
	\MRF(w) = \lbrace v \in \freemon[(U_H \cup \ltr)] \mid v \precsim w, \text{ $v$ is in its most reduced form} \rbrace
\end{equation*}
to be the set of words which are most reduced forms of the defining word.
We will now prove this set has a number of properties, which will be useful in the next section.

\begin{lemma}	\label{lem:MRFstuff}

	Let $w, w^\prime \in \freemon[U_H \cup \ltr]$ be words in HNN reduced form.

Then the following hold
\begin{enumerate}
	\item The set $\MRF(w)$ is non-empty;
	\item Let $v_1 \in \MRF(w)$ then $v_2 \in \MRF(w)$ if and only if $v_1 \sim v_2$;
	\item Either $\MRF(w) \equiv \MRF(w^\prime)$ or $\MRF(w) \cap \MRF(w^\prime) = \emptyset$;
	\item The sets $\MRF(w) \equiv \MRF(w^\prime)$ if and only if $w = w^\prime$ in $H^\ast$;
	\item The set $\MRF(w)$ is finite and can be found by algorithm;
	\item Let $V \subseteq \ouh$ be such that $w \in (V \cup \lttr)^\ast$, then there is some word $v^\prime \in (V \cup \lttr)^\ast$ such that $v^\prime \in \MRF(w)$ and this can be found by algorithm.
\end{enumerate}

\end{lemma}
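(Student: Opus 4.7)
The plan is to handle the six parts roughly in order, using the preceding sink/semicommutation machinery as the main engine, with part (2) as the conceptual core that makes (3) and (4) essentially bookkeeping.

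For (1), the key observation is that a single step $w \rightsquigarrow v$ strictly decreases the number of basis-word occurrences: semicommutation preserves this count (Lemma \ref{lem:basisWordShift}), whereas the sandwiched $\rightarrow_m$ step removes one adjacent pair. Starting from $w$ and iterating $\rightsquigarrow$ therefore must terminate, producing an element of $\MRF(w)$.

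For (2), the reverse direction is the substantive one. Given $v_1, v_2 \in \MRF(w)$ we have $w = v_1 = v_2$ in $H^\ast$, so Lemma \ref{lem:largeSinkExists} yields a common word $v^\dop$ with $v_1 \succsim v^\dop \precsim v_2$; because neither $v_1$ nor $v_2$ admits a genuine $\rightsquigarrow$ step, both $\succsim$ relations must actually be $\sim$, whence $v_1 \sim v_2$. For the forward direction, if $v_1 \sim v_2$ and $v_2 \rightsquigarrow v_3$ via some $v_2 \sim v_2' \rightarrow_m v_3' \sim v_3$, then $v_1 \sim v_2'$ exhibits the same $\rightsquigarrow$ step from $v_1$, contradicting $v_1 \in \MRF(w)$; transitivity of $\succsim$ then places $v_2$ in $\MRF(w)$. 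Part (3) is immediate from (2), since $\MRF(w)$ is precisely the $\sim$-class of any of its elements. For (4): if $\MRF(w) = \MRF(w^\prime)$, pick a common $v$ and read off $w = v = w^\prime$ in $H^\ast$; conversely, if $w = w^\prime$ in $H^\ast$, Lemma \ref{lem:largeSinkExists} provides $v^\dop$ with $w \succsim v^\dop \precsim w^\prime$, and further $\rightsquigarrow$-reductions of $v^\dop$ (possible by (1)) yield a most reduced form $v$ lying in both $\MRF(w)$ and $\MRF(w^\prime)$; now apply (3).

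For (5), by (2) the set $\MRF(w)$ is a single $\sim$-class, so finiteness and algorithmic enumeration follow directly from Lemma \ref{lem:basisWordShift} once any single element is in hand. To produce such an element, iterate the following procedure: given the current HNN reduced form, use Lemma \ref{lem:basisWordShift} to enumerate its finite $\sim$-class, search each representative for a possible elementary cancellation, and if one is found perform it to decrease the basis-word count; otherwise the word is in most reduced form. Termination is guaranteed by (1). For (6), apply (5) to obtain some $v \in \MRF(w)$, and then invoke Lemma \ref{lem:wordOverBasisSubsetExists} with the hypothesis $w \succsim v$ and the set $V$ to produce $v^\prime \in (V \cup \lttr)^\ast$ with $v \sim v^\prime$; by (2), $v^\prime \in \MRF(w)$. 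The algorithm is simply to compute the finite set $\MRF(w)$ by part (5) and pick out any element written over $V \cup \lttr$.

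The main obstacle is the forward direction of (2); once that is established via Lemma \ref{lem:largeSinkExists}, everything else is either an equivalence-class formality or a direct appeal to the algorithmic content already provided by Lemmas \ref{lem:basisWordShift} and \ref{lem:wordOverBasisSubsetExists}.
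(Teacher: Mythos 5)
Your proposal is correct and follows essentially the same route as the paper: termination of cancellation for (1), Lemma \ref{lem:largeSinkExists} plus the most-reduced hypothesis for the substantive direction of (2), with (3) and (4) as consequences, the enumerate-the-$\sim$-class-and-cancel loop via Lemma \ref{lem:basisWordShift} for (5), and Lemma \ref{lem:wordOverBasisSubsetExists} for (6). The only blemish is a cosmetic mislabelling at the end, where you call the direction handled by Lemma \ref{lem:largeSinkExists} the ``forward'' direction after having treated it as the reverse one; the mathematics is unaffected.
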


\begin{proof}

	$(1)$:
The word $w$ is made up of a finite concatenation of words from $\lttr$ and $\fmuh$.
Each elementary cancellation reduces the amount of basis words from $\ouh$ used.
Thus every sequence of elementary cancellations and semicommutations can only contain finitely many elementary cancellations.
Thus there must be a most reduced form for $w$.

	$(2)$:
Suppose that $v_1 \sim v_2$.
As $v_2 \sim v_1 \precsim w$ we have that $v_2 \precsim w$.
Thus if $v_2 \notin \MRF(w)$ there must be some $v_3 \in \freemon[U_H \cup \ltr]$ such that $v_2 \rightsquigarrow v_3$.
However this implies that $v_1 \sim v_2 \rightsquigarrow v_3$ and hence that $v_1 \rightsquigarrow v_3$, which contradicts $v_1 \in \MRF(w)$.
Therefore $v_2 \in \MRF(w)$.

	Suppose that $v_2 \in \MRF(w)$.
That $v_2 \precsim w \succsim v_1$ means that there is a sequence of elementary operations between them.
By Lemma \ref{lem:equalMeansSeqOfElOps} this means that $v_1 = v_2$ and thus by Lemma \ref{lem:largeSinkExists} there is some $v_3$ such that $v_2 \succsim v_3 \precsim v_1$.
By assumption $v_2$ is in most reduced form, therefore $v_2 \succsim v_3$ implies that $v_2 \sim v_3$ as no cancellations are possible.
Similarly $v_3 \sim v_1$ and thus $v_1 \sim v_2$.

	$(3)$:
	Suppose $v \in \MRF(w) \cap \MRF(w^\prime)$.
If $v^\prime \in \MRF(w^\prime)$ then, as $v \in \MRF(w^\prime)$, by $(2)$ we know that $v \sim v^\prime$.
Thus by a second application of $(2)$, as $v \in \MRF(v)$, we know that $v^\prime \in \MRF(w)$.
Therefore $\MRF(w^\prime) \subseteq \MRF(w)$.
Dually $\MRF(w) \subseteq \MRF(w^\prime)$ and so $\MRF(v) = \MRF(v^\prime)$.

	Thus we have shown that any overlap forces equality between the sets.

	$(4)$:
Suppose that $\MRF(w) \equiv \MRF(w^\prime)$.
Let $v \in \MRF(w) \equiv \MRF(w^\prime)$.
This means that $w \succsim v \precsim w^\prime$.
Thus there is a sequence of elementary operations between $w$ and $w^\prime$ and so by Lemma \ref{lem:equalMeansSeqOfElOps} $w = w^\prime$ in $H^\ast$.

	Suppose that $w = w^\prime$ in $H^\ast$.
By Lemma \ref{lem:largeSinkExists} there exists some $w^\dop \in \freemon[U_H \cup \ltr]$ such that $w \succsim w^\dop \precsim w^\prime$.
We know by $(1)$ there is some word $v \in \MRF(w^\dop)$.
As $v \precsim w^\dop$ it follows that $v \precsim w$ also.
Thus as we already know that $v$ is in most reduced form it follows that $v \in \MRF(w)$.
Similarly we may also deduce that $v \in \MRF(w^\prime)$.
By $(3)$ we know that these sets are either equal or disjoint, therefore as their intersection is non-empty we know that $\MRF(w) \equiv \MRF(w^\prime)$.

	$(5)$:
Let $w_1 \equiv w$.
By Lemma \ref{lem:basisWordShift} we know that the set $W_1 = \lbrace w_1^\prime \mid w_1^\prime \sim w_1 \rbrace$ is finite and can be found by algorithm.
If we find some $w_1^\prime \in W_1$ where an elementary cancellation is possible then we perform that cancellation and call the result $w_2$.

	If we repeat this process until we find some $w_k$ such that no member of $W_k = \lbrace w_k^\prime \mid w_k^\prime \sim w_k \rbrace$ has any possible elementary cancellations (this must happen as $w$ is of finite length and each $w_{i+1}$ is strictly shorter than the preceding $w_{i}$).
As $w_i \rightsquigarrow w_{i+1}$ we have that $v_k \precsim v$ and so $w_k \in \MRF(w)$.
Moreover, by $(2)$ we have that $\MRF(w) = W_k$.

	$(6)$:
By $(1)$ there is some $v \in \MRF(w)$.
As $v \precsim w$ by Lemma \ref{lem:wordOverBasisSubsetExists} there exists some $v^\prime \in (V \cup \lttr)^\ast$ such that $v^\prime \sim v$.
By $(2)$ this means that $v^\prime \in \MRF(w)$.
By $(5)$ there are a finite number of words in $\MRF(w)$ and these can be found by algorithm.
When given a finite set of words written over $U_H \cup \lttr$ we can decide if any are written over $V \cup \lttr$ by algorithm.
Therefore we can find $v^\prime$ by algorithm.

\end{proof}

	Taken together these results mean that for any word that is already in HNN reduced form we can find a set of words which collectively act as something like a normal form.
In the next section we will see how this can be applied to solve membership problems.

\section{Submonoids with decidable membership}

	In this section we will show that certain submonoids have decidable membership within the family of HNN extensions described in the previous section.
Before we do that we need to show the following.

\begin{lemma}	\label{lem:QisPcapH}

	Let $H^\ast = H \ast_{t, \phi : A \rightarrow B}$ be a HNN extension of a free group $H = \gpPres$.
Further let $Q \leq H$ be a monoid and let $P = \subMon{Q \cup \lttr} \leq H^\ast$.
Then $Q = P \cap H$ if and only if $\phi(Q \cap A) = Q \cap B$.

\end{lemma}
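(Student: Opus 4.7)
The plan is to prove the two directions of the biconditional separately, with the backward direction being the substantive one.

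For the forward direction, assuming $Q = P \cap H$, I would verify both inclusions of $\phi(Q \cap A) = Q \cap B$ directly from the HNN defining relations. Given $q \in Q \cap A$, we have $\phi(q) = t^{-1} q t$ in $H^\ast$. The right-hand side is a product of elements of $Q \cup \lttr$ and hence lies in $P$, while $\phi(q)$ itself lies in $B \subseteq H$. Therefore $\phi(q) \in P \cap H = Q$ and so $\phi(q) \in Q \cap B$. The reverse inclusion is symmetric: if $b \in Q \cap B$, then $\phi^{-1}(b) = t b t^{-1}$ lies in both $P$ and $A \subseteq H$, hence in $Q \cap A$, and $\phi$ sends it to $b$.

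For the backward direction, the inclusion $Q \subseteq P \cap H$ is immediate since $Q \subseteq P$ and $Q \subseteq H$ by definition. To establish $P \cap H \subseteq Q$, I would take $p \in P \cap H$ and, since $p \in P$, represent it in $H^\ast$ by a word of the form
\begin{equation*}
    w \equiv q_0 t^{\varepsilon_1} q_1 t^{\varepsilon_2} \ldots t^{\varepsilon_k} q_k
\end{equation*}
where each $q_i \in Q$ (inserting the identity of $Q$ between consecutive stable letters if required) and $\varepsilon_i \in \lbrace -1, 1 \rbrace$. The strategy is to apply Britton reductions repeatedly while maintaining the invariant that every element sitting between consecutive stable letters still represents an element of $Q$. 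If a pinch of the form $t^{-1} q_i t$ with $q_i \in A$ occurs, then $q_i \in Q \cap A$ and the hypothesis gives $\phi(q_i) \in Q \cap B \subseteq Q$; replacing $t^{-1} q_i t$ by $\phi(q_i)$ and merging it into the surrounding $q$-pieces preserves the form because $Q$ is closed under multiplication. The dual case $t q_i t^{-1}$ with $q_i \in B$ is handled by the other inclusion of the hypothesis, which yields $\phi^{-1}(q_i) \in Q \cap A \subseteq Q$. After finitely many such steps the word is in HNN reduced form and still represents $p \in H$, so Britton's Lemma forces the number of stable letters to drop to zero, leaving a word over $\freemon$ that represents a single element of $Q$; hence $p \in Q$.

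The only real subtlety is the bookkeeping of the invariant across a reduction: after eliminating a pinch, the inserted $\phi^{\pm 1}(q_i) \in Q$ is concatenated with its $q$-neighbours, and this combined word represents a product of elements of $Q$, hence an element of $Q$. This is straightforward but is where the hypothesis $\phi(Q \cap A) = Q \cap B$ is used in an essential way, exactly once per direction of the pinch. No deeper use of the free-group structure of $H$, of the results on elementary operations proved in Section~3, or of the finiteness results needs to be invoked; the lemma is really about how the submonoid generated by $Q \cup \lttr$ interacts with the Britton reduction process.
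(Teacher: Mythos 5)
Your proposal is correct and follows essentially the same route as the paper's proof: the forward direction by observing $t^{-1}qt \in P \cap B$ and arguing dually for $\phi^{-1}$, and the backward direction by expressing $p \in P \cap H$ as a word over $Q \cup \lttr$ and eliminating pinches via the hypothesis while keeping the pieces in $Q$ until Britton's Lemma forces the stable letters to vanish. No substantive differences.
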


\begin{proof}

	``$\Rightarrow$'':
Suppose that $Q = P \cap H$.
Let $q \in Q \cap A$.
It is clear from $q \in Q$ and the definition of $P$ that $t^{-1} q t \in P$.
Additionally $q \in A$ implies that $t^{-1} q t = \phi(q) \in B$.
So $\phi(q) \in P \cap B = P \cap (H \cap B) = (P \cap H) \cap B = Q \cap B$, which means that $\phi(Q \cap A) \subseteq Q \cap B$.
A dual argument gives that $\phi^{-1}(Q \cap B) \subseteq Q \cap A$, which is equivalent to $\phi(Q \cap A) \supseteq Q \cap B$ and therefore $\phi(Q \cap A) = Q \cap B$.

	``$\Leftarrow$'':
	Suppose that $\phi(Q \cap A) = Q \cap B$.
Let $p \in P \cap H$.
We know by the definition of $P$ that we can write this in the form $q_0 t^{\varepsilon_1} q_1 \ldots t^{\varepsilon_n} q_n$, where $q_i \in Q$ for $0 \leq n$ and $\varepsilon_i \in \lbrace -1, 1 \rbrace$ for $1 \leq i \leq n$.
Further as $p \in H$ we know that the sum of $\varepsilon_i$ must be $0$ and that the form is only HNN reduced if $n = 0$.
We know that if a word is not HNN reduced then there is either a sequence of the form $t^{-1} a t$ where $a \in A$ or one of the form $t b t^{-1}$ where $b \in B$.
This means that there is either an $i$ such that $\varepsilon_i = -1$, $q_i \in A$ and $\varepsilon_{i+1} = 1$ or an $i$ such that  $\varepsilon_i = 1$, $q_i \in B$ and $\varepsilon_{i+1} = -1$.

	If the first is true then $t^{\varepsilon_i} q_i t^{\varepsilon_{i+1}} = \phi(q_i)$, this being a well defined use of $\phi$ due to $q_i \in A$.
Moreover as $q_i \in Q \cap A$ we know that $\phi(q_i) \in Q \cap B$.
This means that we can write $p$ in the form
\begin{equation*}
	q_0 t^{\varepsilon_1}
	\ldots
	t^{\varepsilon_{i-1}} q_{i-1} \phi(q_i) q_{i+1} t^{\varepsilon_{i+2}} 
	\ldots
	t^{\varepsilon_n} q_n
\end{equation*}
which is composed of elements belonging to $Q$ and $\lttr$ only and has two fewer occurences of $t$ and $t^{-1}$ collectively.
A dual argument can be made for a similar reduction in the second case.

	This means that if $n \neq 0$ then we can inductively reduce $p$ to a form where $n = 0$ while preserving that the non-$t$ parts belong to $Q$, so $p \in Q$ and therefore $P \cap H \subseteq Q$.
That $Q \subseteq P \cap H$ is obvious from the definitions and so we can conclude that $Q = P \cap H$.

\end{proof}

	We can now further deduce the following, which is one of the main results of this paper.

\begin{theorem}	\label{thm:submonMemInHast}

	Let $H^\ast = H \ast_{t, \phi : A \rightarrow B}$ be a HNN extension of a free group $H = \gpPres$ with free basis $U_H \subset \freemon$.
Further let this free basis be such that there are $U_A, U_B \subset \ouh$ where $A = \subGp{U_A}$, $B = \subGp{U_B}$ and $\phi(\oua) = \oub$.
Finally let $Q$ be a submonoid of $H$ generated by some $U_Q \subseteq \ouh$.

	Membership in the submonoid $P =  \subMon{ Q \cup \lttr }$ within $H^\ast$ is decidable if $\phi(Q \cap A) = Q \cap B$.

\end{theorem}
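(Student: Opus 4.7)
The plan is to reduce membership testing in $P$ to the most reduced form machinery from Section $3$. Given an input word $w$ over $\overline{X} \cup \lttr$, I would proceed in three steps. First, rewrite $w$ over $\ouh \cup \lttr$ (possible since $U_H$ is a free basis of $H$) and then apply Lemma \ref{lem:redFormByAlg} to obtain an HNN reduced form $v \in (\ouh \cup \lttr)^\ast$ of $w$; the preconditions hold because $H$ is free (so has decidable word problem) and membership in $A = \subGp{U_A}$ and $B = \subGp{U_B}$ within $H$ is decidable, as these subgroups consist of the elements whose reduced $U_H$-forms lie in $\oua^\ast$ and $\oub^\ast$ respectively. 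Second, compute the finite set $\MRF(v)$ algorithmically using Lemma \ref{lem:MRFstuff}(5). Third, output ``yes'' if and only if some element of $\MRF(v)$ is syntactically a word over $U_Q \cup \lttr$.

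Correctness amounts to showing that $w \in P$ if and only if $\MRF(v) \cap (U_Q \cup \lttr)^\ast \neq \emptyset$. The ``if'' direction is immediate: any $v' \in \MRF(v) \cap (U_Q \cup \lttr)^\ast$ belongs to $\subMon{U_Q \cup \lttr} = P$, and $v' = v = w$ in $H^\ast$, so $w \in P$. For ``only if'', suppose $w \in P$, so $w = q_0 t^{\varepsilon_1} q_1 \cdots t^{\varepsilon_n} q_n$ in $H^\ast$ with $q_i \in Q$. Because $U_H$ is a free basis and $U_Q \subseteq \ouh$, free reduction over $U_H$ of any product of elements of $U_Q$ can only delete letters, so each $q_i$ can be taken to be a word in $U_Q^\ast$.

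Now I would apply the iterative HNN reduction procedure from the ``$\Leftarrow$'' direction of the proof of Lemma \ref{lem:QisPcapH}: whenever the current expression contains $t^{-1} q t$ with $q \in A$ (respectively $t q t^{-1}$ with $q \in B$), replace the triple by $\phi(q)$ (respectively $\phi^{-1}(q)$) and merge with the neighbouring blocks. The hypothesis $\phi(Q \cap A) = Q \cap B$ guarantees that the replacement lies in $Q$, and by the previous paragraph it can again be expressed as a word over $U_Q$. The procedure terminates at an HNN reduced word $v_1 \in (U_Q \cup \lttr)^\ast$ with $v_1 = w$ in $H^\ast$. Since $v_1$ and $v$ are equal in $H^\ast$ and both HNN reduced, Lemma \ref{lem:MRFstuff}(4) yields $\MRF(v_1) = \MRF(v)$; and Lemma \ref{lem:MRFstuff}(6), applied with $V = U_Q$, furnishes some $v_2 \in \MRF(v_1) \cap (U_Q \cup \lttr)^\ast = \MRF(v) \cap (U_Q \cup \lttr)^\ast$, as required.

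The principal obstacle is the ``only if'' direction: one must arrange the HNN reduction of a $P$-expression so that every block between consecutive stable letters remains a word over $U_Q$. This is precisely where the hypothesis $\phi(Q \cap A) = Q \cap B$ enters, since it is exactly what makes the rewrites $t^{-1} q t \mapsto \phi(q)$ and $t q t^{-1} \mapsto \phi^{-1}(q)$ remain inside $Q$, and hence, by the rigidity of the free basis $U_H$, expressible over $U_Q$. Once this step is established, the algorithmic conclusion follows as a routine assembly of the MRF framework developed in Section $3$.
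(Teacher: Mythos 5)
Your proposal is correct and follows essentially the same route as the paper's proof: reduce the input to HNN reduced form over $\ouh \cup \lttr$, use the ``$\Leftarrow$'' argument of Lemma \ref{lem:QisPcapH} to show that membership in $P$ forces the existence of an HNN reduced representative over $U_Q \cup \lttr$, and then conclude via parts $(4)$, $(5)$ and $(6)$ of Lemma \ref{lem:MRFstuff} that deciding membership amounts to checking whether $\MRF(w)$ meets $(U_Q \cup \lttr)^\ast$. The only difference is that you spell out the rewriting $t^{-1}qt \mapsto \phi(q)$ in slightly more detail than the paper, which simply cites the earlier lemma.
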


\begin{proof}

	Suppose we have a word $w \in \freemon[X \cup \ltr]$.
By Lemma \ref{lem:redFormByAlg} we can algorithmically find a word equal to $w$ in $H^\ast$ which is in HNN reduced form.
Therefore we can assume without loss of generality that $w$ in HNN reduced form.
We can also assert without loss of generality that the parts of $w$ written over $\freemon$ are written over the free basis $\ouh$.

	If the element in $H^\ast$ represented by $w$ belongs to $P$ then there is some word $w^\prime \in (U_Q \cup \lttr)^\ast$ which is equal to $w$ in $H^\ast$.
We saw in the proof of Lemma \ref{lem:QisPcapH} if we can find a HNN reduced form of $w^\prime$ while preserving that the $H$ parts are written over $U_Q$.
We can, therefore, assume without loss of generality that this $w^\prime$ is also in HNN reduced form, as well as written over $U_Q \cup \lttr$.

	By part $(4)$ of Lemma \ref{lem:MRFstuff} that $w = w^\prime$ in $H^\ast$ implies that $\MRF(w) = \MRF(w^\prime)$.
By part $(6)$ of the same Lemma that $w^\prime \in (U_Q \cup \lttr)^\ast$ implies that there is some $v \in \MRF(w^\prime) = \MRF(w)$ such that $v \in (U_Q \cup \lttr)^\ast$.
As $v \in \MRF(w)$ we know that $v \precsim w$ and that therefore there is a sequence of elementary operations between $v$ and $w$, by Lemma \ref{lem:equalMeansSeqOfElOps} this means that $v = w$ in $H^\ast$.
By the definition of $P$, we can see that $v \in P$ and thus that $w \in P$.

	By part $(5)$ of the Lemma all the words in $\MRF(w)$ can be found by algorithm.
As we can determine if a word written over $(U_H \cup \lttr)$ is also written over $(U_Q \cup \lttr)$ by algorithm, we may algorithmically determine whether the set $\MRF(w) \cap (U_Q \cup \lttr)^\ast$ is non-empty.
Hence we can decide whether $w \in P$ within $H^\ast$.
\end{proof}

\begin{remark}

	We may observe that this theorem almost fulfils the requirements of Theorem C of Dolinka and Gray \cite{DolGra21} as, by Lemma \ref{lem:QisPcapH}, $Q = P \cap H$.
However the key difference is that their result requires that $A \cup B \subseteq M$ and so it cannot be used to reproduce this one.

\end{remark}

\begin{remark}
	
Though the next section is focused on applying these results to one-relator HNN extensions of one-relator groups, it is important to note that nothing in this section nor the preceding one is restricted to one-relator HNN extensions nor to HNN extensions of one-relator groups.
In all cases we have only assumed that the HNN extension and its basis are finitely presented (in addition, of course, to whatever the various lemmas and theorems explicitly require).

\end{remark}

\section{Application to the Prefix Membership Problem}

	In this section we show how the results of the previous section can be applied to the prefix membership problem of a family of groups.
We can then use this to decide the word problem of the inverse monoids with presentations corresponding to the family of groups by the following result of Ivanov, Margolis and Meakin \cite[Theorem 4.1]{IMM01}.
We note that the family we identify in Theorem \ref{thm:wtwWords} below has the property of being defined by defined by a single $t$-sum zero word whose prefixes are a mixture of $t$-sum positive and $t$-sum negative, and that they have decidable prefix membership problem cannot be shown by the methods in Dolinka and Gray \cite{DolGra21}.

\begin{theorem}	\label{thm:oneRelPMPgivesWP}

	Let $M = \invPres[r = 1]$ be an inverse monoid and $G = \gpPres[r = 1]$ be its maximal group image.
If $r$ is cyclically reduced and $G$ has decidable prefix membership problem then $M$ has decidable word problem.

\end{theorem}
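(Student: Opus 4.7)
The plan is to deduce this statement directly from the cited theorem of Ivanov, Margolis and Meakin, using cyclic reducedness as the bridge that brings us into the hypotheses of their result. Their theorem requires the one-relator inverse monoid to be E-unitary, and the hypothesis that $r$ is cyclically reduced is exactly what is needed to guarantee this.

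First, I would recall (or cite from \cite{IMM01}) the fact that if $r \in \freemon$ is cyclically reduced, then the one-relator inverse monoid $M = \invPres[r=1]$ is E-unitary. This is a non-trivial structural result about one-relator inverse monoids and is the only place in the argument where the cyclic reducedness hypothesis is used; it is the step where the real content lies, and it is already established in the literature.

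Second, I would invoke \cite[Theorem 4.1]{IMM01}: for any E-unitary one-relator inverse monoid $M = \invPres[r=1]$ with maximal group image $G = \gpPres[r=1]$, the word problem of $M$ is Turing reducible to the prefix membership problem of $G$ with respect to the presentation $\langle X \mid r = 1 \rangle$. Concretely, given $w \in \freemon$, deciding whether $w = 1$ in $M$ amounts to checking that every prefix of (a suitable form of) $w$ lies in the prefix monoid $P = \subGp{\pref(r)} \leq G$, which is precisely the prefix membership problem for $G$.

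Finally, combining these two observations gives the result: by cyclic reducedness, $M$ is E-unitary, so the word problem in $M$ reduces to the prefix membership problem in $G$; by hypothesis, the latter is decidable; hence the word problem in $M$ is decidable. The only potential obstacle is that one must be careful to use the cited reduction exactly as stated (in particular, that the reduction is to the prefix membership problem with respect to the fixed presentation $\gpPres[r=1]$, since as noted earlier in the paper the prefix monoid depends on the chosen presentation); but this is automatic here since we are working with the same one-relator presentation on both sides.
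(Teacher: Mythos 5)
Your proposal is correct and matches the paper's own justification exactly: the paper presents this theorem as a direct consequence of Ivanov, Margolis and Meakin's work, relying (as its introduction makes explicit) on the fact that $M$ is E-unitary when $r$ is cyclically reduced together with their Theorem 4.1 reducing the word problem of an E-unitary special one-relator inverse monoid to the prefix membership problem of its maximal group image. There is nothing further to add; the decomposition into these two cited facts is precisely the intended argument.
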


	We can combine this with Theorem \ref{thm:submonMemInHast} and Theorem \ref{thm:HNNextByRho} to get our final result.

\begin{theorem}	\label{thm:wtwWords}

	Let $M = \invPres[wt^{-2 \sigma_t(w)}w = 1][X \cup \ltr]$ be an inverse monoid and let $G = \gpPres[wt^{\sigma_t(w)}w = 1][X \cup \ltr]$ be its maximal group image,
where the word $w \in \freemon[X \cup \ltr]$ meets the following requirement:
\begin{itemize}
	\item That $w \equiv x_0^{\varepsilon_0} t^{n_1} x_1^{\varepsilon_1} \ldots t^{n_k} x_k^{n_k}$, such that $n_i \in \mathbb{Z}$ for $1 \leq i \leq k$, $\varepsilon_i \in \lbrace -1, 1 \rbrace$ and $x_i \in X$ for $0 \leq i \leq k$ and $x_i \not\equiv x_j, x_j^{-1}$ if $i \neq j$;
	\item That $\sigma_t(w) = n_1 + \ldots + n_k \neq 0$.
\end{itemize}

Both the prefix membership problem for $G$ and the word problem for $M$ are decidable.

\end{theorem}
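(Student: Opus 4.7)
The strategy is to apply Theorem \ref{thm:oneRelPMPgivesWP} to reduce the word problem for $M$ to the prefix membership problem for $G$, and then establish the latter via Theorem \ref{thm:submonMemInHast} applied to an HNN decomposition of $G$. Write $s := \sigma_t(w)$ and $r := wt^{-2s}w$. First check that $r$ is cyclically reduced: distinctness of the $x_i$ blocks internal reduction, and $s\neq 0$ forces $k\geq 1$ so that $x_0^{\varepsilon_0}$ and $x_k^{\varepsilon_k}$ differ. Since $\sigma_t(r)=0$, Theorem \ref{thm:HNNextByRho} presents $G$ as $H \ast_{t,\phi:A\to B}$ with $H = \gpPres[\rho_t(r)=1][\Xi_w]$; a direct computation gives
\[
\rho_t(r) \;=\; (x_0)^{\varepsilon_0}_{-\sigma_0}\cdots(x_k)^{\varepsilon_k}_{-\sigma_k}\cdot(x_0)^{\varepsilon_0}_{s-\sigma_0}\cdots(x_k)^{\varepsilon_k}_{s-\sigma_k},
\]
a product of $2(k+1)$ pairwise distinct $\Xi_w$-letters (distinctness of the indices uses $s\neq 0$), so $H$ is free. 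Denote the first $k+1$ of these letters as $y_1,\ldots,y_{k+1}$ and the last $k+1$ as $z_1,\ldots,z_{k+1}$.

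The main construction is a clever free basis $U_H$ of $H$. The canonical generating sets $\Xi_w\setminus\{z_j\}$ for $A$ and $\Xi_w\setminus\{y_j\}$ for $B$ from Theorem \ref{thm:HNNextByRho} are disjoint in their extremal letters, so they cannot both sit inside one free basis of $H$. Via Tietze transformations I adjoin the $|s|$ elements $\pi_j := \phi^j(y_1y_2\cdots y_{k+1})$ for $0\leq j<|s|$ (each lying in $A\cap B$) as basis letters, and in exchange eliminate $y_1$, $z_1$, and the $|s|-1$ in-between letters of $x_0$; these are all expressible in the remaining letters using $\rho_t(r)=1$ together with the identities $\pi_{j+1}=\phi(\pi_j)$. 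The resulting basis has the correct size $|\Xi_w|-1$ and admits generating sets $U_A,U_B\subseteq\ouh$ for $A,B$ sharing the $\pi_j$; moreover $\phi$ permutes $\oua$ bijectively onto $\oub$ via the orbit $\pi_0\mapsto\pi_1\mapsto\cdots\mapsto\pi_{|s|-1}\mapsto\pi_0^{-1}$ (the final arrow uses $\phi^{|s|}(\pi_0)=z_1\cdots z_{k+1}=\pi_0^{-1}$ from $\rho_t(r)=1$) together with shifts of the remaining $y$-letters and in-betweens onto the corresponding $z$-letters and shifted in-betweens, giving $\phi(\oua)=\oub$.

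For the prefix monoid $P$, the factorisation $r = w\cdot t^{-2s}\cdot w$ is unital in $M$ (both $w$ and $t^{-2s}=w^{-2}$ are units of $M$) and hence conservative in $G$ by Lemma \ref{lem:unitalIsCon}, so $P$ is generated by prefixes of $w$ and $t^{-2s}$. Reducing each prefix to HNN normal form separates it as $(H\text{-part})\cdot t^{(\text{integer})}$; combining prefixes across the middle $t^{-2s}$-block yields $t,t^{-1}\in P$, and the $H$-parts are products of $U_H$-letters. Taking $U_Q\subseteq\ouh$ to consist of those letters together with $\pi_0^{-1}$ (needed so that $Q$ is closed under semicommutation through $t$), one obtains $P=\subMon{Q\cup\lttr}$ with $Q:=\subMon{U_Q}$. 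Since $H$ is free on $U_H$, an element of $Q$ is in $A$ (resp.\ $B$) iff its reduced $U_H$-expression uses only letters of $\oua$ (resp.\ $\oub$), so $Q\cap A=\subMon{U_Q\cap\oua}$ and symmetrically for $B$; the basis alignment arranged in the previous paragraph then gives $\phi(Q\cap A)=Q\cap B$. Theorem \ref{thm:submonMemInHast} delivers decidable membership in $P$, which is the prefix membership problem for $G$, and Theorem \ref{thm:oneRelPMPgivesWP} converts this to decidable word problem for $M$.

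The crux of the argument is the second paragraph---arranging the basis so that both $U_A,U_B\subseteq\ouh$ while the $\phi$-orbit of $\pi_0$ closes up inside $\ouh$. For $|s|=1$ (as in Example 7.6 of Dolinka--Gray) only $\pi_0 = y_1\cdots y_{k+1}$ is needed and the verification is transparent, but for $|s|>1$ one must track $\pi_0,\pi_1,\ldots,\pi_{|s|-1}$ through all the in-between levels of $x_0$ and verify that precisely these letters can be cleanly eliminated via the relations $\pi_{j+1}=\phi(\pi_j)$ and $\rho_t(r)=1$ to leave a valid free basis.
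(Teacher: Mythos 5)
Your overall strategy coincides with the paper's (reduce the word problem for $M$ to the prefix membership problem for $G$ via Theorem \ref{thm:oneRelPMPgivesWP}, present $G$ as an HNN extension of a free group $H$ via Theorem \ref{thm:HNNextByRho}, and feed the prefix monoid into Theorem \ref{thm:submonMemInHast}), but the free basis you build in your second paragraph does not support the last step, and that step is where the real content of the theorem lies. Write $s = \sigma_t(w) > 0$, $s_i = n_1 + \cdots + n_i$, and for $0 \leq i \leq k$, $0 \leq j \leq s$ let $\gamma_{i,j} \equiv x_{0,s_0+j} x_{1,s_1+j} \cdots x_{i,s_i+j}$ be the level-$j$ shift of the $i$-th partial product of the first half of $\rho_t(r)$; in this notation your $\pi_j$ is $\gamma_{k,j}$. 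The generators of the prefix monoid contributed by the prefixes of $w$ are exactly $\gamma_{0,0}, \gamma_{1,0}, \ldots, \gamma_{k,0}$ together with $t^{\pm 1}$. Theorem \ref{thm:submonMemInHast} insists that $Q = \subMon{U_Q}$ with $U_Q \subseteq \ouh$, and since free reduction over a basis only deletes letters, any such $U_Q$ with $\gamma_{i,0} \in \subMon{U_Q}$ must contain every letter occurring in the reduced expression of $\gamma_{i,0}$ over your basis. Over your basis $\gamma_{0,0} = \pi_0 x_{k,s_k}^{-1} \cdots x_{1,s_1}^{-1}$, so $U_Q$ must contain $x_{1,s_1}^{-1}$; but for $k \geq 2$ the element $x_{1,s_1}^{-1} = \gamma_{1,0}^{-1} \gamma_{0,0}$ does \emph{not} lie in $P \cap H = \subMon{\lbrace \gamma_{i,j} \rbrace}$: every generator $\gamma_{i,j}$ of that monoid has exponent sum $\geq 0$ in the letter $\gamma_{1,0}$ (the only generator carrying a negative exponent anywhere is $\gamma_{k,s} = \gamma_{k,0}^{-1}$, and $1 \neq k$), whereas $x_{1,s_1}^{-1}$ has $\gamma_{1,0}$-exponent $-1$. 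Hence $\subMon{Q \cup \lttr} \supsetneq P$ and the algorithm decides membership in the wrong submonoid. Your construction does happen to work when $k = 1$ (e.g.\ the Dolinka--Gray example), because there $x_{1,s_1}^{-1} = \gamma_{k,0}^{-1}\gamma_{0,0}$ is itself a product of two of the $\gamma$'s, which may be why the problem is not visible from that test case.

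The repair --- and the paper's key construction --- is to take as free basis not just the full products $\pi_j = \gamma_{k,j}$ but \emph{all} the partial products: $U_H = \lbrace \gamma_{i,j} \rbrace \setminus \lbrace \gamma_{k,s} \rbrace$. This set has the correct cardinality $\lvert \Xi_w \rvert - 1$ and generates $H$ (since $x_{0,s_0+j} = \gamma_{0,j}$, $x_{i,s_i+j} = \gamma_{i-1,j}^{-1}\gamma_{i,j}$ and $\gamma_{k,s} = \gamma_{k,0}^{-1}$), so it is a free basis by Theorem \ref{thm:FGdefByRank}; the prefix generators are now literally basis letters, $U_Q = U_H \cup \lbrace \gamma_{k,0}^{-1} \rbrace$ works, and $\phi(\gamma_{i,j}) = \gamma_{i,j+1}$ still closes up through $\gamma_{k,s} = \gamma_{k,0}^{-1}$, giving $\phi(\oua) = \oub$ and, via Lemma \ref{lem:monVMgpVG}, $\phi(Q \cap A) = Q \cap B$. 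Your remaining steps (cyclic reducedness of the relator, the unital hence conservative factorisation, and the appeals to Theorems \ref{thm:submonMemInHast} and \ref{thm:oneRelPMPgivesWP}) agree with the paper.
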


\begin{proof}

	We will assume throughout that $\sigma(w) \geq 0$ and that all $\varepsilon_i = 1$, we may do this without loss of generality as all words are only a relabelling away from such a case.

	It may be seen that the sole relator word of $M$ has a unital factorisation into $w$ and $t$.
This means by Lemma \ref{lem:unitalIsCon} that in $G$ the factorisation is conservative and so the prefix monoid of $G$ can take the form
\begin{equation*}
	P = \subMon{\pref(w),t,t^{-1}}.
\end{equation*}

	All prefixes of $w$ have the form $p \equiv x_0^{\varepsilon_0} t^{n_1} x_1^{\varepsilon_1} \ldots x_i t^{j}$ where $i \leq k$ and either $0 \leq j \leq n_{i+1}$ if $n_{i+1} \geq 0$ or $n_{i+1} \leq j \leq 0$ if $n_{i+1} \leq 0$.
However as we have both $t$ and $t^{-1}$ in our generating set for $P$ any prefixes of $w$ ending in $t$ or $t^{-1}$ are extraneous.
So we may write
\begin{equation*}
	P = \subMon{x_0, x_0 t^{n_1} x_1, \ldots, x_0 t^{n_1} x_1 \ldots t^{n_k} x_k, t, t^{-1}}.
\end{equation*}

	The overall $t$-exponent sum of $G$'s sole relator is $0$, therefore we can apply Theorem \ref{thm:HNNextByRho} to $G$ and acquire a new group $H$ which $G$ can be viewed as a HNN extension of.

	The group $H$ takes the form
\begin{equation*}
	H = \gpPres[x_{0,s_0} x_{1,s_1} \ldots x_{k,s_k} x_{0,s_0+\sigma(w)} x_{1,s_1+\sigma(w)} \ldots x_{k,s_k+\sigma(w)} = 1][\Xi_r]
\end{equation*}
where $s_i = n_1 + \ldots + n_i$ and $\Xi_r$ is the alphabet consisting of all $x_{i,j}$ such that $0 \leq i \leq k$ and $s_i \leq j \leq s_i + \sigma(w)$.

	We claim that every $x_{i,j}$ in the defining relation of $H$ is distinct.
As the $x_i$ in $w$ is defined as distinct letters, we know that for all $0 \leq i \leq k$, the letters $x_{i, s_i}$ are distinct from each other.
Likewise the letters $x_{i, s_i + \sigma(w)}$ are distinct from each other.
Moreover as $\sigma(w) \neq 0$ these two sets of letters are disjoint.

	This group is such that if we take $\phi$ to be the mapping $\phi(x_{i,j}) = x_{i,j+1}$, 
$A$ to be the subgroup generated by $\Xi_r \setminus \lbrace x_{i,s_i} \text{ for } 0 \leq i \leq k \rbrace$ and 
$B$ to be the subgroup generated by $\Xi_r \setminus \lbrace x_{i,s_i + \sigma(w)} \text{ for } 0 \leq i \leq k \rbrace$
then the HNN extension $H^\ast = H \ast_{t, \phi: A \rightarrow B}$ is isomorphic to $G$ under the mapping $x_{i,j} \mapsto t^{-j}x_it^j$.

	Applying the preimage of such a mapping to our most recent generating set for $P$ we get
\begin{equation}
	P^\ast = \subMon{x_{0,s_0}, \, x_{0, s_0} x_{1, s_1}, \,
	\ldots, \,
	x_{0,s_0} x_{1,s_1} \ldots x_{k,s_k}, t, t^{-1} }
\end{equation}
such that $P^\ast$ within $H^\ast$ is isomorphic to $P$ within $G$.

	We define $\gamma_{i,j} = x_{0,s_0+j} x_{1,s_1+j} \ldots x_{i,s_i+j}$ for $0 \leq i \leq k$ and $0 \leq j \leq \sigma(w)$, we shall refer to the set of all such $\gamma_{i,j}$ as $\Gamma$.

	We claim that $U_H = \Gamma \setminus \lbrace \gamma_{k, \sigma(w)} \rbrace$ is a free basis of $H$.
	First we note that $\gamma_{k,0}^{-1} = \gamma_{k, \sigma(w)}$ in $H$ and so we can immediately recover the discarded element of $\Gamma$.
Secondly, $x_{0, s_0 + j} = \gamma_{0,j}$ and $x_{i, s_i + j} = \gamma_{i-1,j}^{-1} \gamma_{i,j}$ for $0 < i \leq k$.
Therefore $\overline{\Gamma \setminus \lbrace \gamma_{k, \sigma(w)} \rbrace}$ generates all of $\Xi_r$ and consequently all of $H$.
Finally, as there is an obvious one-to-one correspondence between the elements of $\Gamma$ and $\Xi_r$ the two sets are of the same size.
This means that $U_H = \Gamma \setminus \lbrace \gamma_{k, \sigma(w)} \rbrace$ is of size $\mid \Xi_r \mid - 1$ which is the rank of $H$ as a free group and so by Theorem \ref{thm:FGdefByRank} $U_H$ is a free basis of $H$.

	We can now rewrite our previous expression over the new basis to produce
\begin{equation*}
	P^\ast = \subMon{\gamma_{0,0}, \gamma_{1,0}, \ldots, \gamma_{k, \sigma(w)}, t, t^{-1} }.
\end{equation*}
As $t^{-1} \gamma_{i,j} t= \gamma_{i,j+1}$ for $0 \leq j < \sigma(w)$, the generators above are sufficient to produce the rest of $\Gamma$.
Thus we can write
\begin{equation*}
	P^\ast = \subMon{ \Gamma \cup \lttr }
\end{equation*}
as we may add superfluous generators without consequence.
We may then find the form
\begin{equation*}
	P^\ast = \subMon{ \left( \Gamma \setminus \lbrace \gamma_{k, \sigma(w)} \rbrace \right) \cup \lbrace \gamma_{k,0}^{-1} \rbrace \cup \lttr }
\end{equation*}
by making the substitution of $\gamma_{k,0}^{-1}$ for $\gamma_{k, \sigma(w)}$ as these are equal in $H$.

	We now let $U_Q = U_H \cup \lbrace \gamma_{k,0}^{-1} \rbrace = \left( \Gamma \setminus \lbrace \gamma_{k, \sigma(w)} \rbrace \right) \cup \lbrace \gamma_{k,0}^{-1} \rbrace$ and $Q = \subMon{U_Q}$.
We seek to show that $\phi(Q \cap A) = Q \cap B$.
In $H$ we know that $\gamma_{k,\sigma(w)} = \gamma_{k,0}$, thus $U_Q = \Gamma$ in $H$.

	All the defined mappings of $\phi$ and $\phi^{-1}$ send individual elements of $\Gamma$ to other individual elements of $\Gamma$, and thus we may deduce that $\phi(\Gamma \cap A) = \Gamma \cap B$.
Furthermore as $U_Q$ is a relabelling $\Gamma$ this means that $\phi(U_Q \cap A) = U_Q \cap B$.
By the first part of Lemma \ref{lem:monVMgpVG} we can see that by setting $V_M = U_Q$ and $V_G = U_A$ that $U_Q \cap A = U_Q \cap \oua$, dually we can see that $U_Q \cap B  = U_Q \cap \oub$.
Further by the second part of the same Lemma we see that any element of $q_a \in Q \cap A = \subMon{U_Q} \cap \subGp{U_A}$ can be written as a word $w_a \in (U_Q \cap  \oua)$.
This means that $\phi(q_a)$ is sent to an element which can be written over
\begin{equation*}
	\phi(U_Q \cap \oua) = \phi(U_Q \cap A) = U_Q \cap B = U_Q \cap \oub.
\end{equation*}
We can then use the second part of Lemma \ref{lem:monVMgpVG} to say that a word written over $U_Q \cap \oub$ represents an element of $Q \cap B$ in $H$.
Thus we have shown that $\phi(Q \cap A) = Q \cap B$.

	We take the sets of words $U_A = U_H \setminus \lbrace \gamma_{0, \sigma(w)}, \gamma_{1, \sigma(w)}, \ldots, \gamma_{k-1, \sigma(w)} \rbrace$ and $U_B = U_H \setminus \lbrace \gamma _{0,0}, \gamma_{1,0}, \ldots, \gamma_{k-1,0} \rbrace$ (recall that $\gamma_{k,0} = \gamma_{k, \sigma(w)}^{-1}$ and that therefore we can apply $\phi^{-1}$) to be the bases for the groups $A$ and $B$ respectively.
Thus we have a setup which fulfills all the conditions of Theorem \ref{thm:submonMemInHast} and therefore membership in $P^\ast$ within $H^\ast$ is decidable.
This is equivalent to being able to decide membership in $P$ within $G$ and so $G$ has decidable prefix membership problem.

	Further as $w t^{-2 \sigma(w)} w$ is cyclically reduced we can apply Theorem \ref{thm:oneRelPMPgivesWP} and thus decide the word problem for $M$.

\end{proof}

\begin{example}

	An example of the class described in Theorem \ref{thm:wtwWords} is the group $\gpPres[bt^{-1}a t^2 bt^{-1}a = 1][a,b,t]$.
The reason for particular interest in this group and its prefix monoid is that Dolinka and Gray \cite{DolGra21} noted it as example to which they could not apply the methods they had developed in that paper for assessing the prefix membership problem and thus left open the question of whether the problem was indeed decidable.

	Under Theorem \ref{thm:wtwWords} this can be viewed as a HNN extension of the group $\gpPres[b_0 a_1 b_{-1} a_0 = 1][a_0, a_1, b_{-1}, b_0]$.
Using the language of the theorem we would say that
$\Gamma = \lbrace b_0, b_0 a_1, b_{-1}, b_{-1} a_0 \rbrace$,
$U_H = \lbrace b_0, b_0 a_1, b_{-1} \rbrace$,
$U_A = \lbrace b_0 a_1, b_{-1} \rbrace$,
$U_B = \lbrace b_0, b_0 a_1 \rbrace$
and $U_Q = \lbrace b_0, b_0 a_1, b_{-1}, a_1^{-1} b_0^{-1} \rbrace$.
Hence by Theorem \ref{thm:wtwWords} we can conclude that this group has decidable prefix membership problem and thus that the corresponding one-relator inverse monoid has decidable word problem.

\end{example}

	This all raises two questions.
The first is whether we can extend the statement of Theorem \ref{thm:wtwWords}.

\begin{question}

	Do all one-relator HNN extensions of free groups have decidable prefix membership problem?

\end{question}

	The second question is whether the machinery developed in Sections $3$ and $4$ can be taken further.
The class of group discussed in Section $3$ seem to have a certain ``freeness'', therefore we may ask:

\begin{question}

	Let $H^\ast = H \ast_{t, \phi: A \rightarrow B}$ be a HNN extension of a finitely generated free group $H$ with free basis $U_H$.
Further let there be $U_A, U_B \subseteq U_H$ such that $\subGp{U_A} = A$, $\subGp{U_B} = B$ and $\phi(\oua) = \oub$.
Does $H^\ast$ have decidable submonoid membership problem?

\end{question}

\bibliography{tsz-citations}{}

\begin{thebibliography}{10}

\bibitem{Ben69}
Mich\`ele Benois.
\newblock Parties rationnelles du groupe libre.
\newblock {\em C. R. Acad. Sci. Paris S\'er. A-B}, 269:A1188--A1190, 1969.

\bibitem{BKS87}
R.~G. Burns, A.~Karrass, and D.~Solitar.
\newblock A note on groups with separable finitely generated subgroups.
\newblock {\em Bull. Austral. Math. Soc.}, 36(1):153--160, 1987.

\bibitem{DolGra21}
Igor Dolinka and Robert~D. Gray.
\newblock New results on the prefix membership problem for one-relator groups.
\newblock {\em Trans. Amer. Math. Soc.}, 374(6):4309--4358, 2021.

\bibitem{IMM01}
S.~V. Ivanov, S.~W. Margolis, and J.~C. Meakin.
\newblock On one-relator inverse monoids and one-relator groups.
\newblock {\em J. Pure Appl. Algebra}, 159(1):83--111, 2001.

\bibitem{Law98}
Mark~V. Lawson.
\newblock {\em Inverse semigroups}.
\newblock World Scientific Publishing Co., Inc., River Edge, NJ, 1998.
\newblock The theory of partial symmetries.

\bibitem{LynSch01}
Roger~C. Lyndon and Paul~E. Schupp.
\newblock {\em Combinatorial group theory}.
\newblock Classics in Mathematics. Springer-Verlag, Berlin, 2001.
\newblock Reprint of the 1977 edition.

\bibitem{Mil71}
Charles~F. Miller, III.
\newblock {\em On group-theoretic decision problems and their classification},
  volume No. 68 of {\em Annals of Mathematics Studies}.
\newblock Princeton University Press, Princeton, NJ; University of Tokyo Press,
  Tokyo, 1971.

\bibitem{Pet84}
Mario Petrich.
\newblock {\em Inverse semigroups}.
\newblock Pure and Applied Mathematics (New York). John Wiley \& Sons, Inc.,
  New York, 1984.
\newblock A Wiley-Interscience Publication.

\bibitem{War24}
Jonathan Warne.
\newblock The word problem of finitely presented special inverse monoids via
  their groups of units (arxiv:2412.03264), 2024.

\bibitem{Wei15}
Armin Weiß.
\newblock {\em On the complexity of conjugacy in amalgamated products and HNN
  extensions}.
\newblock Dissertation, Universität Stuttgart, Stuttgart, 2015.

\end{thebibliography}
\bibliographystyle{plain}

\end{document}